\documentclass[10pt,preprint]{elsarticle}
\usepackage{lineno,hyperref,amsfonts,amsmath,amsthm,bigints}
\usepackage[a4paper, total={6in, 8in}]{geometry}
\usepackage[usenames, dvipsnames]{color}
\modulolinenumbers[3]

\journal{Journal of \LaTeX\ Templates}

\newcommand{\delp}{\Delta_p}      
\newcommand{\Om}{\Omega}
\newcommand{\del}{\delta}
\newcommand{\lam}{\lambda}
\newcommand{\alp}{\alpha}
\newcommand{\pa}{\partial}
\newcommand{\rtar}{\rightarrow}
\newcommand{\ga}{\gamma}
\newcommand{\Sig}{\Sigma}
\newcommand{\eps}{\epsilon}

\newtheorem{thm}{Theorem}[section]
\newtheorem{lem}{Lemma}[section]
\newdefinition{rmk}{Remark}[section]
\newproof{pf}{Proof}

\bibliographystyle{elsarticle-num}


\begin{document}

\begin{frontmatter}

\title{Multiplicity results for a quasilinear equation with singular nonlinearity\tnoteref{mytitlenote}}
\tnotetext[mytitlenote]{The first author is supported by DST-Inspire Faculty Award MA-2013029.\\
The second author is supported by NBHM Fellowship No: 2/39(2)/2014/NBHM/R$\&$D-II/8020/June 26,2014}

\author{Kaushik Bal, Prashanta Garain \fnref{myfootnote}}
\address{Dept of Mathematics and Statistics\\
 Indian Institute of Technology\\
 Kanpur-208016\\
 Uttar Pradesh, India}
 
\cortext[mycorrespondingauthor]{Corresponding Author}
\ead{kaushik@iitk.ac.in,pgarain@iitk.ac.in}

\begin{abstract}
For an open, bounded domain $\Om$ in $\mathbb{R}^N$ which is strictly convex with $C^2$ boundary, we show that there exists a $\land>0$ such that the singular quasilinear problem
\begin{eqnarray*}
&-\delp u =\cfrac{\lambda}{u^{\del}}+u^q\,\,\mbox{in}\,\,\Om\\
&u=0\,\,\mbox{on}\,\,\partial\Om;\, \,\,u>0\,\,\mbox{in}\,\,\Om
\end{eqnarray*}
admits atleast two solution $ u$ and $v$ in $W^{1,p}_{loc}(\Om)\cap L^{\infty}(\Om)$ for any $\del>0$ and $0<\lam<\land$ provided $1<p<N$ and $p-1<q<\frac{p(N-1)}{N-p}-1$.\\
Moreover the solutions $u$ and $v$ are such that $u^{\alp}$ and $v^{\alp}$ are in $W^{1,p}_0(\Om)$ for some $\alp>0$.
\end{abstract}

\begin{keyword}
Quasilinear Problem, Singular Nonlinearity, A priori estimates, Topological Degree
\MSC[2010] 35B01\sep  35B44\sep 35B45\sep 35B65
\end{keyword}

\end{frontmatter}

\linenumbers


\section{Introduction}
The aim of this work is to establish the multiplicity of the solution to the quasilinear elliptic problem given by:
\begin{equation}\label{mp}
\begin{array}{lc}
&-\delp u =\cfrac{\lambda}{u^{\del}}+u^q\,\,\mbox{in}\,\,\Om\\
&u=0\,\,\mbox{on}\,\,\partial\Om;\, \,\,u>0\,\,\mbox{in}\,\,\Om
\end{array}
\end{equation}
where $\Om$ is a open, bounded domain which is strictly convex with $C^2$ boundary.  We also assume that either $1<p<N$ and $p-1<q<p_{*}-1$ where $p_{*}:=\frac{p(N-1)}{N-p}$ is the Serrin Exponent. Also $\del>0$ and $\lam>0$ is assumed. We show that under the given conditions the problem (\ref{mp})
admits atleast two generalized solution $u,v\in W^{1,p}_{loc}(\Omega)$.

 By generalized solution we mean a function $u\in W^{1,p}_{loc}(\Om)\cap L^{\infty}(\Omega)$ such that $\frac{\phi}{u^{\delta}}\in L^1(\Om)$ and satisfying \[\int_{\Om}|\nabla u|^{p-2}\nabla u\cdot\nabla \phi \;dx=\lambda\int_{\Om}\frac{\phi}{u^{\delta}} dx+\int_{\Om}u^q\phi \;dx\] for every $\phi\in W^{1,p}_0(\omega)$ with $\omega\subset\subset\Om$. The Dirichlet boundary condition will be interpreted as that a suitable positive power of $u$ belongs to $W^{1,p}_0(\Om)$.

Problems with singular nonlinearity has been studied extensively in the literature. Crandall et al \cite{CrRaTa} in a famous work considered the problem 
$$-\Delta u=f(u)\;\;\mbox{in}\;\;\Omega;\;\;u=0\;\;\mbox{in}\;\;\partial\Omega$$
with $f$ singular near $0$ and showed that there exists a unique classical solution $u\in C^2(\Omega)\cap C(\bar\Omega)$. In particular if one considers $f(x)=\frac{1}{x^{\delta}}$ then the classical solution exists for any $\delta>0$. Later Lazer-Mckenna \cite{LaMc} showed that the unique classical solution $u$ is also in $H^1_0(\Omega)$ iff $0<\delta<3$. They also showed that the solution belongs to $C^1(\bar\Omega)$ if $0<\delta<1$. 

Haitao \cite{Ha} studied the perturbed singular problem
\begin{eqnarray}\label{ha}
&-\Delta u=\cfrac{\lambda}{u^{\del}}+u^p\,\,\mbox{in}\,\,\Om\\
&u=0\,\,\mbox{on}\,\,\partial\Om;\, \,\,u>0\,\,\mbox{in}\,\,\Om\nonumber
\end{eqnarray}
and showed that there exists two weak solutions for $\lambda<\Lambda$, no solution for $\lambda>\Lambda$ and atleast one solution for $0<\delta<1<p\leq \frac{N+2}{N-2}$ and some $\Lambda>0$.
This paper was generalized for p-Laplacian by Giacomoni et at \cite{GiScTa} who showed the existence of two solution for $0<\delta<1$ and $p-1<q\leq p^{*}-1$. The restriction of $\delta$ was removed in the paper by Boccardo and Orsina \cite{BoOr} who studied the problem
$$-div (M(x)\nabla u)=\frac{f(x)}{u^{\delta}}\;\;\mbox{in}\;\;\Omega;\;\;u=0\;\;\mbox{in}\;\;\partial\Omega$$ and showed the existence of a $u\in W_0^{1,1}(\Omega)$ such that for all $\omega\subset\subset\Omega$ there exists $c_{\omega}$ with $u\geq c_{\omega}>0$ in $\omega$ for any $\delta$. Recently the problem has been generalized by Canino et al \cite{CaScTr} for the p-Laplacian. Some results regarding the corresponding parabolic problems can also be found in Badra et al \cite{BaBaGi1},\cite{BaBaGi2} and the reference therein. Arcoya-Merida \cite{ArMo} studied the perturbed problem (\ref{ha}) and showed the multiplicity of solution in $H^1_{loc}(\Om)\cap L^{\infty}(\Om)$ for any $\delta >0$, where solution is meant in a generalized sense as presented in our case. 

Encouraged by the work of Arcoya-Merida \cite{ArMo} we generalize their result for the p-laplacian, but for the degeneracy of the p-laplacian the same proof will not work in our case. We started by the standard approach of studying the multiplicity of the regularized problem. To this aim Kelvin transform was used to obtain the boundary estimates on the solutions of the regularized problem in \cite{ArMo} which fails for p-Laplacian see Lindqvist \cite{Lind}. Moreover application of the moving plane method is also a problem due to the degeneracy of the p-laplacian at the critical points. We overcome this difficulty by proving an uniform Hopf Lemma by modifying the arguments of Vazquez \cite{Vaz} (also see Peral \cite{Pe}) in combination with a delicate application of Moving Plane technique by combining some of our ideas with that of Castorina-Sanch\'on \cite{CaSa} to arrive at the required estimate, what this did is gave us a uniform neighbourhood of the boundary for any solution to allow the blow-up analysis of Gidas-Spruck \cite{GiSp}, which required segrerating the maximas of $u_n$ in some interior of the boundary independent of $n$.  The existence was obtained by using a bifurcation result of Ambrosetti-Arcoya \cite{AmAr}, and then we pass to the limit to obtain our desired result.
It should be noted that the same problem has been handled in the paper by Giacomoni et al \cite{GiScTa} where they have proved the existence of atleast two solution in $W^{1,p}_0(\Om)$ provided $0<\delta<1$ among other things. So in this paper we will mainly concentrate on the case $\delta\geq 1$.\\
Before we move to our main result let us define the set 
$$\mathbb{E}=\{(p,q):1<p<N,\;p-1<q<p_{*}-1\; \}$$
where $p_{*}$ is the Serrin exponent given by $p_{*}=\frac{p(N-1)}{N-p}$. For the rest of the paper
we will assume $(p,q)\in \textbf{E}$ and $\Om$ to be open bounded domain in $\mathbb{R^N}$ with $C^2$ boundary unless otherwise mentioned.

\section{Main Result}
\begin{thm}\label{MT}
Given $\delta>0$ there exist $\Lambda>0$  such that the problem (\ref{mp}) admits atleast two solution $u,v\in W^{1,p}_{loc}(\Omega)$ provided $\Om$ is strictly convex with $(p,q)\in\mathbb{E}$ and for $0<\lambda<\Lambda$. Moreover there exists $\alpha>0$ such that $u^{\alpha}, v^{\alpha}\in W^{1,p}_0(\Omega)$. 
\end{thm}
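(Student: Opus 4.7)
The plan is to follow the scheme sketched in the introduction: solve the approximating problem
\begin{equation*}
-\delp u = \frac{\lam}{(u+\tfrac{1}{n})^{\del}} + u^q \text{ in }\Om,\qquad u=0\text{ on }\pa\Om,
\end{equation*}
which has a regular right-hand side, produce two solutions $\underline{u}_n \neq \bar u_n$ with bounds independent of $n$, and pass to the limit. For each $n$, I would construct $\underline{u}_n$ by the sub/super-solution method: a torsion-type supersolution exists when $\lam$ is smaller than some threshold $\Lambda>0$ (which is the $\Lambda$ in the statement), and it is combined with a subsolution drawn from the purely singular problem $-\delp w = \lam/(w+\tfrac{1}{n})^{\del}$ (whose solvability follows from Boccardo-Orsina/Canino-Sciunzi-Trombetta). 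Monotone iteration produces $\underline{u}_n \in W^{1,p}_0(\Om) \cap L^\infty(\Om)$.

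To obtain a second solution, I would apply the bifurcation-type theorem of Ambrosetti-Arcoya to the completely continuous fixed-point operator $T_{\lam,n}$ associated with the regularized problem on a positive cone in $C^1(\bar\Om)$. The idea is to show that the Leray-Schauder degree of $I-T_{\lam,n}$ in a small neighborhood of $\underline{u}_n$ differs from its degree on a large ball, producing a second fixed point $\bar u_n$. The subcritical condition $q < p_{*}-1$ is what eventually forces large solutions of the regularized problem to stay uniformly bounded, making the degree on large balls computable.

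The crucial and hardest step is the uniform (in $n$) $L^\infty$ bound on $\bar u_n$. I would argue by contradiction via a Gidas-Spruck blow-up: if $M_n := \|\bar u_n\|_\infty = \bar u_n(x_n) \to \infty$, rescale
\begin{equation*}
v_n(y) := M_n^{-1}\,\bar u_n\Big(x_n + M_n^{-\frac{q-p+1}{p}} y\Big),
\end{equation*}
so that the singular and linear perturbations vanish in the limit and $v_n$ converges to a nontrivial positive solution of $-\delp v = v^q$ on either $\mathbb{R}^N$ or a half-space with zero Dirichlet datum; in the subcritical range $p-1<q<p_{*}-1$, both are ruled out by the Liouville-type theorem of Serrin-Zou, a contradiction. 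The technical subtlety is that $x_n$ must lie at a fixed positive distance from $\pa\Om$ uniformly in $n$; this is precisely what the uniform Hopf lemma (a quantitative refinement of Vazquez-Peral) and the moving-plane argument \`a la Castorina-Sanch\'on deliver, using strict convexity and $C^2$ regularity of $\pa\Om$. Hopf gives $\bar u_n(x) \geq c\,\mathrm{dist}(x,\pa\Om)$ on a fixed boundary neighborhood $\Om_\eta$ with $c,\eta$ independent of $n$, and the moving-plane step shows $\bar u_n$ is monotone along inward normals inside $\Om_\eta$, so no maximum lies there.

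Once uniform bounds are in hand, Lieberman-DiBenedetto $C^{1,\alp}_{loc}$ estimates and the strong maximum principle provide, on each $\omega\subset\subset\Om$, a lower bound $\bar u_n \geq c_\omega>0$ independent of $n$, which lets the singular term $\lam/u_n^{\del}$ pass to the limit by dominated convergence against test functions in $C^\infty_c(\omega)$. The resulting $u=\lim \underline{u}_n$ and $v=\lim \bar u_n$ are then generalized solutions in the sense defined in the introduction, and the strict separation $\underline{u}_n < \bar u_n$ carried to the limit (after verifying it is strict for $n$ large via the degree argument) gives $u \neq v$. Finally, the Dirichlet condition $u^{\alp}, v^{\alp} \in W^{1,p}_0(\Om)$ is recovered by testing the approximating equation against a suitable power of the solution (as in Boccardo-Orsina and Canino et al.): this produces an a priori bound on $\nabla(\bar u_n)^{\alp}$ in $L^p(\Om)$ for $\alp$ chosen large enough that $\alp(\del+1)-1>0$ makes the singular contribution have the right sign, and a weak compactness argument then passes the bound to the limit.
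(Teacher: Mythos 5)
Your proposal follows essentially the same blueprint as the paper's proof: regularize the singular term, obtain two solutions of the regularized problem via a priori bounds plus degree/continuum theory (Ambrosetti--Arcoya), with the uniform $L^\infty$ bound coming from a Gidas--Spruck blow-up that is made legitimate by a uniform Hopf lemma and a moving-plane argument pushing all maxima into a fixed compact subset, then pass to the limit and recover the boundary condition by testing against a power of the solution. The differences are minor implementation choices: you propose monotone iteration between a sub- and a super-solution for the first solution, whereas the paper obtains it from the continuum and then invokes a Diaz--Saa-type uniqueness result (Step~2 of Lemma~\ref{exis}) to identify the small-norm branch; you invoke Serrin--Zou for the Liouville theorem, the paper uses Mitidieri--Pohozaev (Theorem~\ref{liou}), both valid in the stated range $p-1<q<p_*-1$; and you cite Boccardo--Orsina/Canino et al.\ for the purely singular auxiliary problem, which the paper proves directly (Lemma~\ref{pursing}). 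Your mention of a half-space limit is slightly redundant since, as you yourself note, the uniform Hopf plus moving-plane step excludes boundary-clustering of maxima, so only the whole-space Liouville theorem is actually used.

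Two points deserve a flag. First, the condition you state for the last step, $\alpha(\delta+1)-1>0$, is not the right threshold: testing with $(z_n+1/n)^{\theta}$, $\theta=p(\alpha-1)+1$, produces the singular term $\int_\Omega (z_n+1/n)^{\theta-\delta}$, whose finiteness (when $\theta<\delta$) is obtained from the subsolution bound $\underline{u}\gtrsim \phi_1^{p/(\delta+p-1)}$ and requires $\theta>(\delta-1)(p-1)/p$, i.e.\ $\alpha>(p-1)(\delta+p-1)/p^2$; this does not coincide with your condition except in special cases (e.g.\ $p=2$, $\delta=1$). Second, strict pointwise separation $\underline{u}_n<\bar u_n$ alone does not survive the a.e.\ limit; the paper separates the two limits quantitatively via the $L^\infty$ norms, $\|u_n\|_\infty\le\mu$ while $\|v_n\|_\infty\ge\mu+\epsilon$, so the same gap persists for $u$ and $v$ — you should make explicit this kind of uniform gap rather than rely on pointwise strictness.
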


\begin{rmk}
Note that for $p=2$ the above result only proves the multiplicity result for the range   $1<q<2_{*}-1$ which is less than $1<q<2^{*}-1$. But this is due to the Liouville Theorem for the p-Laplacian. For $p=2$ the method used here gives the multiplicity result in the full range as in Arcoya-Merida \cite{ArMo}.
\end{rmk}

\section{Preliminaries}
Before we begin with the proof of our main result we start by proving a few lemmas:
\begin{lem}\label{pursing}
Given $\del>0$ the problem 
\begin{equation}\label{pursinre}
-\delp u=\frac{\lam}{(u+\frac{1}{n})^{\del}}\,\mbox{in}\;\Om;\;\;u=0\,\,\mbox{on}\,\,\partial\Om
\end{equation}
 admits a unique positive solution in $W^{1,p}_0(\Omega)\cap L^{\infty}(\Omega)$ for each $n\in\mathbb{N}$. Moreover, $u_n$ is increasing w.r.t $n$ and $u_n(x)>c_{\omega}>0$ for all $\omega\subset\subset\Om$ and $c_{\omega}$ depends only on $\omega$ and not on $n$. Also, $||u_n||_{\infty}\leq M {\lambda}^{\frac{1}{\delta+p-1}}$ for all  $n\in\mathbb{N}$ with $M$ independent of $n$.
\end{lem}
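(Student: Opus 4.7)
The plan is to dispatch the four assertions in turn, with the uniform $L^\infty$ bound as the main obstacle. For existence at fixed $n$, the regularized nonlinearity $f_n(s):=\lam(s+1/n)^{-\del}$ is continuous and bounded by $\lam n^\del$ on $[0,\infty)$, and its primitive $F_n$ has at worst sublinear growth (bounded when $\del>1$, logarithmic when $\del=1$, and $\sim s^{1-\del}$ when $\del<1$). Hence the energy
\[
J_n(u) = \frac{1}{p}\int_\Om|\nabla u|^p\,dx - \lam\int_\Om F_n(u^+)\,dx
\]
is coercive and weakly lower semicontinuous on $W^{1,p}_0(\Om)$ and admits a non-negative minimizer $u_n$ satisfying \eqref{pursinre} in the weak sense; standard De Giorgi / Moser iteration applied to the bounded right-hand side $f_n(u_n)\leq\lam n^\del$ places $u_n$ in $L^\infty(\Om)$ (with, a priori, an $n$-dependent bound). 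Uniqueness follows from the Diaz--Saa inequality, applicable because $s\mapsto f_n(s)/s^{p-1}$ is strictly decreasing on $(0,\infty)$.

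For monotonicity, $f_m\geq f_n$ pointwise whenever $m\geq n$, so $u_n$ is a subsolution of the equation for $u_m$; the weak comparison principle for the $p$-Laplacian with a decreasing right-hand side gives $u_n\leq u_m$ in $\Om$. In particular $u_n\geq u_1$ throughout $\Om$, and since $u_1$ satisfies $-\delp u_1=\lam(u_1+1)^{-\del}\geq\lam(\|u_1\|_\infty+1)^{-\del}>0$, Vazquez's strong maximum principle forces $u_1>0$ in $\Om$ with $u_1\geq c_{\omega}>0$ on every $\omega\subset\subset\Om$, where $c_{\omega}$ depends only on $\omega$. Hence $u_n(x)\geq c_{\omega}$ for every $n$.

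The uniform $L^\infty$ bound is the main technical step, which I plan to handle by a Stampacchia truncation. Testing \eqref{pursinre} with $(u_n-k)^+\in W^{1,p}_0(\Om)$ for $k>0$, and using $(u_n+1/n)^\del\geq u_n^\del\geq k^\del$ on $\{u_n>k\}$ so that the regularization parameter drops out entirely, gives
\[
\int_\Om|\nabla(u_n - k)^+|^p\,dx \leq \frac{\lam}{k^\del}\int_\Om(u_n - k)^+\,dx.
\]
Combining with the Sobolev embedding $\|(u_n-k)^+\|_{Np/(N-p)}\leq C\|\nabla(u_n-k)^+\|_p$ and H\"older's inequality on the level set of measure $a(k):=|\{u_n>k\}|$ yields the recurrence
\[
(h-k)\,a(h) \leq C\,\lam^{1/(p-1)}\,k^{-\del/(p-1)}\,a(k)^{\beta},\qquad h>k>0,
\]
with $\beta = 1 + p/(N(p-1)) > 1$. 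Stampacchia's iteration lemma, applied at a threshold $k_0$ chosen to balance the two factors---namely $k_0^{1+\del/(p-1)}\sim\lam^{1/(p-1)}|\Om|^{\beta-1}$, equivalently $k_0\sim\lam^{1/(\del+p-1)}$---forces $a(k)=0$ for $k\geq 2k_0$, producing $\|u_n\|_\infty\leq M\lam^{1/(\del+p-1)}$ with $M$ depending only on $|\Om|,N,p,\del$. The key observation making this work uniformly in $n$ is that the truncation argument never sees the regularization $1/n$ on the level sets $\{u_n>k\}$.
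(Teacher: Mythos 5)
Your proof is correct and follows the same overall strategy as the paper's; in particular, both rest on a Stampacchia truncation together with the crucial observation that $(u_n+\tfrac1n)^{\del}\geq k^{\del}$ on $\{u_n>k\}$, so the regularization parameter never enters the uniform estimate, and both obtain the lower bound $c_{\omega}$ from $u_n\geq u_1>0$ via Vazquez and the comparison that gives monotonicity in $n$. The tactics differ in two small ways. For existence at fixed $n$ you minimize a single coercive functional directly, whereas the paper freezes the singular coefficient, solves the resulting linear-in-the-data problem variationally, and applies Schauder's fixed-point theorem to the solution operator on $L^p(\Om)$; either works, and yours is a bit more economical (modulo the usual care that $F_n(u^+)$ is only Lipschitz, not $C^1$, at $u=0$, which is handled by the standard truncated-extension trick). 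For the uniform $L^\infty$ bound you keep $\lam$ in the iteration and calibrate the starting level $k_0\sim\lam^{1/(\del+p-1)}$ to balance terms, whereas the paper normalizes $\lam=1$, runs the iteration only over levels $k\geq1$ so the factor $k^{-\del}$ can be dropped, and recovers the $\lam$-dependence afterward by the homogeneity substitution $w=\lam^{-1/(\del+p-1)}v$; again equivalent, and your variant avoids the separate scaling step. One bookkeeping correction: your displayed recurrence should carry the Sobolev exponent, e.g.\ $(h-k)^{p^*}a(h)\leq C\big(\lam/k^{\del}\big)^{p^*/(p-1)}a(k)^{\beta}$ with $\beta=(p^*-1)/(p-1)$ (or $\beta=p^*/p$ if you use the paper's H\"older split), not $1+p/(N(p-1))$; in all cases $\beta>1$ and your conclusion $k_0\sim\lam^{1/(\del+p-1)}$, hence $\|u_n\|_\infty\leq M\lam^{1/(\del+p-1)}$, is unaffected.
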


\begin{proof}[\textbf{Proof of Lemma \ref{pursing}}]
Fix $v\in L^p(\Omega)$ and $n\in\mathbb{N}$, consider $J_{\lambda}:W^{1,p}_0(\Om)\to \mathbb{R}$ defined as $$J_{\lambda}(u)=\frac{1}{p} \int_{\Om} |\nabla u|^p dx-\lambda \int_{\Om} \frac{u}{(|v|+\frac{1}{n})^{\delta}} dx$$
Clearly, $J_{\lambda}$ is continuous, coercive and strictly convex in $W^{1,p}_0(\Om).$ Hence there exists a unique minimizer $w\in W^{1,p}_0(\Om)$. Define $S:L^{p}(\Omega)\rightarrow L^p(\Omega)$ by
$$S(v)=(-\Delta_{p})^{-1}(\frac{\lambda}{(\vert{v}\vert+\frac{1}{n})^{\delta}}):=w$$
Taking $w$ as a test function in the equation $-\Delta_{p}{w}=\frac{\lambda}{(\vert{v}\vert+\frac{1}{n})^{\delta}}$, we get
\[\int_{\Omega}{\vert{\nabla{w}}\vert}^{p} dx<{\lambda}n^{\delta}\int_{\Omega}\vert{w}\vert dx\]
By the Poincar\'e Inequality,  $||w||_p\leq C$ where $C$ is independent of $w$ but depends on $n$ and $\lambda.$\\
Again since $S$ is continuous and compact in $L^p(\Om)$ we have by Schauder fixed point theorem, the existence of a fixed point say $w$. 
 Hence by strong maximum principle (Theorem \ref{scp2}) we have $w>0$ in $\Omega$ satisfying, 
 \begin{equation*}
 \delp w=\frac{\lambda}{(w+\frac{1}{n})^{\delta}};\;\; w\in W^{1,p}_0(\Om)
 \end{equation*}
uniqueness is a simple consequence of the monotonicity of the singularity.
 
Denote $u_i$ to be the solution of the equation
\begin{equation}\label{p_i}
-\delp u=\lambda\big(u+\frac{1}{i}\big)^{-\del}\,\mbox{in}\;\Om;\;\;u=0\,\,\mbox{on}\,\,\partial\Omega
\end{equation} 
for $i=1,2,..$

Subtracting equation (\ref{p_i}) for $i=n$ from $i=n+1$ and multiplying with $(u_n-u_{n+1})^{+}$ we have,
\begin{multline}\label{mono}
\int\limits_{\Om} (|\nabla u_n|^{p-2}\nabla u_n-|\nabla u_{n+1}|^{p-2}\nabla u_{n+1})\cdot\nabla (u_n-u_{n+1})^{+}dx\\
\leq \lambda\int\limits_{\Om}\big[(u_n+\frac{1}{n+1})^{-\delta}- (u_{n+1}+\frac{1}{n+1})^{-\delta}\big] (u_n-u_{n+1})^{+} dx
\end{multline}

From the Algebraic Inequality we get for $p\geq 2$,  $$\int\limits_{\Om}|\nabla u_n|^{p-2}\nabla u_n-|\nabla u_{n+1}|^{p-2}\nabla u_{n+1},\nabla (u_n-u_{n+1})^{+} dx\geq C_p ||\nabla (u_n-u_{n+1})^{+}||^p\geq 0$$ 
Again for $1<p<2$, we have $$\int\limits_{\Om} |\nabla u_n|^{p-2}\nabla u_n-|\nabla u_{n+1}|^{p-2}\nabla u_{n+1},\nabla (u_n-u_{n+1})^{+} dx\geq C_p \frac{||u_n-u_{n+1}||^2}{(||u_n||+||u_{n+1}||)^{2-p}}\geq 0$$
Again from the monotonicity of $f(x)=x^{-\delta}$ we have,
\begin{equation*}
\int\limits_{\Om}[(u_n+\frac{1}{n+1})^{-\delta}- (u_{n+1}+\frac{1}{n+1})^{-\delta}] (u_n-u_{n+1})^{+} dx\leq 0
\end{equation*} 
Combining this with (\ref{mono}) we have,
 \[||(u_{n}-u_{n+1})^{+}||=0\] which combining with the boundary conditions gives $$(u_{n}-u_{n+1})^{+}=0$$ 
therefore $u_n$ is monotonically increasing w.r.t $n$.

By Strong Maximum principle of Vazquez \cite{Vaz}, $u_1>0$ in $\Om$ where $u_1$ solves the equation $$-\delp u=\frac{\lam}{(u+1)^{\del}}\,\mbox{in}\;\Om;\;\;u=0\;\;\mbox{in}\;\;\partial\Om$$ Hence, using regularity theorem of Lieberman \cite{Lie} one can conclude that $u_n\in C^1(\bar{\Om})$ for all $n\in\mathbb{N}$. Therefore from monotonicity of solutions we can conclude that $u_n>u_1$ in $\Om$ and hence $$u_n>c_{\omega}>0 \;\;\mbox{for}\;\;\omega\subset\subset\Omega$$ with $c_{\omega}$ is independent of $n.$\\ Now to show the uniform boundedness of the solutions we assume, $v=u_{n}$ and let $\lambda=1$.
For $k\geq 1$, choose $$\phi:=G_{k}(v)=\begin{cases}
v-k\;\mbox{if}\; v>k\\
0\;\;\;\mbox{if}\;\;\; v\leq k
\end{cases}$$
and define, $A(k)=\{x\in\Omega:v>k\}$.
So for $1<k<h$ we have, $A(h)\subset A(k)$.\\
Since, $-\Delta_{p}{v}=\frac{1}{(v+\frac{1}{n})^{\delta}}<\frac{1}{v^{\delta}}$ hence
\begin{equation}{\nonumber}
\int_{A(k)}|\nabla v|^p dx<\int_{A(k)}\frac{v-k}{v^{\delta}}dx\leq{|A(k)|^{\frac{1}{p'}}}
{\vert\vert{(v-k)}\vert\vert}_{L^{p}(A(k))}<c{\vert{A}(k)\vert}^{\frac{1}{p^{'}}}{\vert\vert{\nabla{v}}\vert\vert}_{L^{p}(A(k))}
\end{equation}
By the Poincar\'e Inequality and Sobolev embedding theorem we have,
\begin{equation}{\nonumber}
{\vert\vert{v}\vert\vert}^{p-1}_{L^{p*}(A(k))}<\frac{c}{S^{p-1}}{\vert{A}(k)\vert}^{\frac{1}{p^{'}}}.
\end{equation}
where $c$ and $S$ are the Poincare and Sobolev constant respectively with $p'=\frac{p}{p-1}$.
Using the above inequalities we get,
\begin{equation}{\nonumber}
{\vert{A(h)}\vert}\leq\big({\frac{c}{S^{p-1}}}\big)^{\frac{p^{*}}{p-1}}\frac{1}{(h-k)^{p^{*}}}{\vert{A(k)\vert}^{\frac{p^{*}}{p}}}.\\
\end{equation}
If we choose $d=({\frac{c}{S^{p-1}}})^{\frac{p^{*}}{p-1}},\alpha=p^{*},\beta=\frac{p^{*}}{p}>1.$\\
Hence for $h>k>1,$
\begin{equation}{\nonumber}
{\vert{A(T)}\vert}=0.
\end{equation}
which implies $v\in L^{\infty}(\Omega)$ and $||v||_\infty\leq T$ for some $T$ independent of $n.$\\
Now, for any $\lambda>{0}$ suppose $v$ satisfies
\begin{equation}{\nonumber}
\int_{\Omega}{\vert\nabla{v}\vert}^{p-2}\nabla{v}.\nabla{\phi}<{\lambda}\int_{\Omega}\frac{\phi}{v^{\delta}}
\end{equation}
for all $\phi\in [W_0^{1,p}(\Omega)]^{+}$.\\
Choosing $w=(\frac{1}{\lambda})^{\frac{1}{\delta+p-1}}v$ we see that $w$ satisfies:
\begin{equation}{\nonumber}
\int_{\Omega}{\vert\nabla{w}\vert}^{p-2}\nabla{w}.\nabla{\phi}<\int_{\Omega}\frac{\phi}{w^{\delta}},\;\forall\;\phi\in{W}_{0}^{1,p}({\Omega}),\;\phi>0.
\end{equation}
Hence from the case $\lambda =1$ we have,
\begin{equation}{\nonumber}
\begin{aligned}
{\vert\vert{w}\vert\vert}_{\infty}\leq{T}\;\mbox{which implies}\;{\vert\vert{v}\vert\vert}_{\infty}\leq{T}{\lambda}^{\frac{1}{\delta+p-1}}
\end{aligned}
\end{equation}
\end{proof}

\begin{lem}\label{wosing} 
There exists $\delta_0>0$ such that every bounded non-trivial solution $u$ of the problem $-\Delta_p u=u^q\;\mbox{in}\;\Om$ satisfies $||u||_{\infty}>\delta_0$.
\end{lem}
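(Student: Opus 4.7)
The plan is to test the equation against $u$ itself and combine the resulting energy identity with Poincar\'e's inequality, exploiting the fact that $q+1-p>0$ by the assumption $(p,q)\in \mathbb{E}$. Concretely, since $u$ is a (generalized) solution of $-\Delta_p u=u^q$ with the Dirichlet-type boundary behaviour built into the framework of the paper, I would take $u$ as an admissible test function to obtain
\begin{equation*}
\int_{\Omega}|\nabla u|^p\,dx=\int_{\Omega}u^{q+1}\,dx.
\end{equation*}

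Next I would exploit the pointwise bound $u^{q+1}=u^{q+1-p}u^p\le \|u\|_{\infty}^{q+1-p}u^p$, which is legitimate precisely because $q+1-p>0$ thanks to $(p,q)\in\mathbb{E}$. This yields
\begin{equation*}
\int_{\Omega}|\nabla u|^p\,dx\le \|u\|_{\infty}^{q+1-p}\int_{\Omega}u^p\,dx.
\end{equation*}
Applying the Poincar\'e inequality $\int_{\Omega}u^p\,dx\le C_P\int_{\Omega}|\nabla u|^p\,dx$ and observing that $\int_{\Omega}|\nabla u|^p\,dx>0$ since $u$ is non-trivial (else $u\equiv 0$ after using the boundary condition), I can cancel this strictly positive quantity from both sides to arrive at
\begin{equation*}
1\le C_P\,\|u\|_{\infty}^{q+1-p},
\end{equation*}
and hence $\|u\|_{\infty}\ge C_P^{-1/(q+1-p)}=:\delta_0$, which is the desired uniform lower bound.

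The only potentially delicate point is the choice of admissible test functions: one must make sense of $u$ as a test function in the generalized-solution framework used here. If $u$ is only known to be in $W^{1,p}_{\mathrm{loc}}(\Omega)\cap L^{\infty}(\Omega)$ with a suitable power of $u$ in $W^{1,p}_0(\Omega)$, I would first truncate and use $T_k(u)\in W^{1,p}_0(\Omega)\cap L^{\infty}(\Omega)$ as a test function, then pass to the limit $k\to\infty$ using monotone/dominated convergence (the integrand on the right is bounded by $\|u\|_{\infty}^{q+1}|\Omega|<\infty$), which legitimises taking $u$ directly in the calculation above. Apart from this technical step, the argument is purely an energy-Poincar\'e comparison, and the monotonicity of the map $M\mapsto M^{q+1-p}$ for $q+1-p>0$ is what makes the dichotomy ``trivial or bounded below'' work.
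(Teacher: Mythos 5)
Your argument is correct, but it is genuinely different from the one in the paper. The paper argues by contradiction: it takes a sequence $u_n$ with $\|u_n\|_\infty\to 0$, normalizes $v_n=u_n/\|u_n\|_\infty$, observes that $-\Delta_p v_n=\|u_n\|_\infty^{q-p+1}v_n^q\to 0$ uniformly (using $q>p-1$), invokes Tolksdorf's $C^{1,\beta}$ estimates and Ascoli--Arzel\`a to extract a limit $v$ with $\|v\|_\infty=1$ solving $-\Delta_p v=0$, and then contradicts this via a uniqueness lemma of Azizieh--Cl\'ement. Your route is a direct energy--Poincar\'e comparison: test with $u$, use $u^{q+1}\le\|u\|_\infty^{q+1-p}u^p$ (again exploiting $q+1-p>0$), apply Poincar\'e, and cancel the strictly positive Dirichlet energy. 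Your argument is shorter, requires no regularity theory or compactness, and produces an explicit constant $\delta_0=C_P^{-1/(q+1-p)}$, whereas the paper's blow-up scheme is more in the spirit of the later a priori estimates in the paper and would adapt more readily if the nonlinearity were not of pure power form. One small remark: the caveat about truncations in your last paragraph is unnecessary here. In the place where this lemma is used (the degree computation in Lemma~\ref{exis}), the relevant solutions are fixed points of $K_0=(-\Delta_p)^{-1}g$ and hence lie in $W^{1,p}_0(\Omega)\cap L^\infty(\Omega)$; the equation $-\Delta_p u=u^q$ has no singularity, so $u$ itself is an admissible test function and Poincar\'e applies directly.
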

\begin{proof}[\textbf{Proof of Lemma \ref{wosing}}]
Assume there exists a sequence $u_n$ of non-trivial solutions such that
$||u_n||_{\infty}\to 0$ as $n\to\infty$.
Define $v_{n}(x):=u_n(x)||u_n||_{\infty}^{-1}$ then $||v_n||_{\infty}=1$.\\
 Since $u_n$ satisfies $-\delp u=u^q$ hence we have, 
 \begin{equation}\nonumber
 \Delta_{p}{v_{n}}={\vert\vert{u}_{n}\vert\vert}_{\infty}^{q-p+1}{v}_{n}^{q}:=f_{n}
 \end{equation}
Since $f_n$ are uniformly bounded in $L^{\infty}(\Om)$ for sufficiently large $n$, we have by Tolksdorf regularity results \cite{To} that
$||v_n||_{C^{1,\beta}(\overline{\Omega})}\leq M$ for some $\beta\in(0,1)$ and $M$ independent of $n$. By Ascoli-Arzela upto a subsequence $v_n\to v$ in $C_0^1(\overline{\Omega})$, but that would imply $v=0$, thanks to Lemma 1.1 of Azizieh-Clement \cite{AzCl} contradicting that $||v_n||_{\infty}=1$.
\end{proof}

\begin{lem}\label{nonex}
There exists $\bar{\Lambda}>0$ such that for all $\lambda\geq \bar{\Lambda}$ the problem 
\begin{equation}\label{dop}
\begin{array}{lc}
-\delp u=\cfrac{\lam}{(u+\frac{1}{n})^{\del}}+u^q\,\mbox{in}\;\Om;\\
u=0\,\,\mbox{on}\,\,\partial\Om;\;u>0\;\mbox{in}\;\Om
\end{array}
\end{equation}
does not admit any weak solution $u\in W^{1,p}_0(\Omega)$. 
\end{lem}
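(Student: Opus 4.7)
The plan is to use the first Dirichlet eigenfunction of $-\Delta_p$ together with Picone's identity to force an upper bound on $\lambda$ when a solution exists. Let $\phi_1>0$ denote the first eigenfunction on $\Omega$, normalised so that $\int_\Omega \phi_1^p\,dx=1$, with associated eigenvalue $\lambda_1>0$; by Lindqvist's regularity we have $\phi_1\in C^{1,\beta}(\overline{\Omega})\cap W^{1,p}_0(\Omega)$. Assume for contradiction that $u\in W^{1,p}_0(\Omega)$ is a weak solution of (\ref{dop}). Since the source $F(u):=\lambda(u+\tfrac1n)^{-\delta}+u^q$ is bounded (because $(u+\tfrac1n)^{-\delta}\le n^\delta$, and $u\in L^\infty$ by Moser iteration), Vazquez's strong maximum principle (Theorem \ref{scp2}) gives $u>0$ in $\Omega$.

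Next I would invoke the Picone inequality of Allegretto--Huang for the $p$-Laplacian: for each $\eps>0$, $\phi_1^p/(u+\eps)^{p-1}$ is an admissible test function in $W^{1,p}_0(\Omega)$ (it is bounded and has $L^p$ gradient since $u+\eps\ge\eps$ and $\phi_1\in L^\infty$), and Picone's pointwise inequality combined with the weak equation for $u$ gives
\begin{equation*}
\int_\Omega |\nabla\phi_1|^p\,dx \;\ge\; \int_\Omega \frac{F(u)}{(u+\eps)^{p-1}}\phi_1^p\,dx.
\end{equation*}
Letting $\eps\to 0^+$ and applying monotone convergence on the right (the integrand increases as $\eps\downarrow 0$) yields
\begin{equation*}
\lambda_1 \;=\; \int_\Omega |\nabla\phi_1|^p\,dx \;\ge\; \int_\Omega \frac{F(u)}{u^{p-1}}\phi_1^p\,dx.
\end{equation*}

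The remaining task is a purely elementary pointwise minimisation: show that there is $C=C(p,q,\delta)>0$ such that, for every $n\ge 1$ and every $t>0$,
\begin{equation*}
\frac{F(t)}{t^{p-1}} \;=\; \frac{\lambda}{t^{p-1}(t+\tfrac1n)^\delta}+t^{q-p+1}\;\ge\; C\,\lambda^{\frac{q-p+1}{q+\delta}}.
\end{equation*}
Using $(t+\tfrac1n)^\delta\le (t+1)^\delta$ (valid for $n\ge 1$), I would split into $t\le 1$, where $(t+1)^\delta\le 2^\delta$ and $t^{p-1}\le 1$ give $F(t)/t^{p-1}\ge \lambda/2^\delta$, and $t\ge 1$, where $(t+1)^\delta\le 2^\delta t^\delta$ yields $F(t)/t^{p-1}\ge \lambda/(2^\delta t^{p-1+\delta})+t^{q-p+1}$, to which weighted Young's inequality with exponents $\alpha=\tfrac{q-p+1}{q+\delta}$, $1-\alpha=\tfrac{p-1+\delta}{q+\delta}$ applies and produces the asserted bound. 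Both branches grow to infinity with $\lambda$, and since $\sigma:=\tfrac{q-p+1}{q+\delta}\in(0,1)$ the fractional-power bound is the binding one for large $\lambda$.

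Combining the last two displays gives $\lambda_1\ge C\lambda^{\sigma}$, so choosing $\bar\Lambda:=(\lambda_1/C)^{1/\sigma}+1$ produces a contradiction for every $\lambda\ge\bar\Lambda$, independently of $n$. The main technical obstacle is the careful justification of Picone's identity, because $u$ may only vanish at the boundary rather than being uniformly positive; this is why the intermediate step via $\phi_1^p/(u+\eps)^{p-1}$ and monotone convergence is needed. Everything else reduces to the explicit scalar inequality, whose only input is the hypothesis $q>p-1$ that makes the exponent $\sigma$ strictly positive.
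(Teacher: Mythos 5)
Your proposal follows the same core strategy as the paper: test Picone's identity against the first Dirichlet eigenfunction $\phi_1$ to reduce the problem to a scalar inequality, and then show that $\frac{\lambda f_n(t)+t^q}{t^{p-1}}$ grows with $\lambda$ so that a solution cannot exist for $\lambda$ large. Your write-up is slightly cleaner in two places where the paper is terse: you justify the Picone step via the $\eps$-regularized test function $\phi_1^p/(u+\eps)^{p-1}$ followed by monotone convergence, rather than asserting directly (via Hopf) that $\phi_1^p/u^{p-1}\in W^{1,p}_0(\Omega)$; and your explicit two-branch minimization yielding $\lambda_1\geq C\lambda^{\frac{q-p+1}{q+\delta}}$ gives a concrete, manifestly $n$- and $u$-independent value of $\bar\Lambda$, whereas the paper's ``$\bar\Lambda=\max_{x\in\Omega}\frac{\lambda_1 u^{p-1}-u^q}{f_1(u)}$'' reads as depending on the solution $u$ and must be interpreted as the supremum of the scalar function $s\mapsto\frac{\lambda_1 s^{p-1}-s^q}{(s+1)^{-\delta}}$ over $[0,\infty)$ to avoid circularity. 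These are refinements of presentation, not a different route, and the argument is correct.
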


\begin{proof}[\textbf{Proof of Lemma \ref{nonex}}]
Let us assume $\phi_1\geq 0$ to be the first eigenfunction corresponding to the first eigenvalue $\lambda_1$ of the operator $-\delp$ i.e,
\begin{equation*}
-\Delta_{p}{\phi_{1}}=\lambda_{1}{\phi_{1}}^{p-1}\;\mbox{in}\;\Om;\;\phi_1=0\;\mbox{on}\;\partial\Om.
\end{equation*}
Multiplying $\phi_{1}$ on both sides and then integrating we get,
\begin{equation*}
\int_{\Omega}{\vert\nabla{\phi_{1}}\vert}^{p}=\lambda_{1}\int_{\Omega}{\phi_{1}}^{p}.
\end{equation*}
If $u_n$ is the weak solution of the equation (\ref{dop}) then by Strong Maximum Principle \cite{Vaz} we have $\frac{\phi_1^p}{u_n^{p-1}}\in W^{1,p}_0(\Om)$ and hence using Picone Identity (Theorem 2.1 \cite{Ba}) we have,
\begin{eqnarray*}
&\int_{\Omega} |\nabla\phi_1|^p dx- \int_{\Omega}\nabla(\frac{\phi_1^p}{u_n^{p-1}})|\nabla u_n|^{p-2}\nabla u_n dx\geq{0}\\
&\mbox{or,}\;\;\int_{\Omega}({\lambda_{1}u^{p-1}-\lambda{f}_{n}{(u)}-u^{q}}){\phi_{1}^{p}}\geq{0}.
\end{eqnarray*}
where $f_n(u)=(u+\frac{1}{n})^{-\delta}$. 

Choose $\overline{\Lambda}=\max\limits_{x\in\Om}\cfrac{\lambda_{1}u^{p-1}-u^{q}}{f_{1}{(u)}}$\\
Now for every $\epsilon >{0}$ there exist a $\delta_{0}>{0}$ such that $s^q<\epsilon{s}^{p-1}$ for all $s\in [0,\delta_0]$. So for a suitable choice of $\epsilon$ we have $\Lambda>0$.\\  Let $\lambda\geq{\overline{\Lambda}}$ then we have,
\begin{equation}{\nonumber}
{\lambda}\geq\max_{\Om}\frac{\lambda_{1}u^{p-1}-u^{q}}{f_{1}{(u)}}>\frac{\lambda_{1}u^{p-1}-u^{q}}{f_{n}{(u)}}
\end{equation} 
which gives ${(\lambda_{1}u^{p-1}-\lambda{f}_{n}{(u)}-u^{q})}<0$. This is a contradiction.
\end{proof}

\begin{lem}\label{aprio}
Assume $\Om$ to be strictly convex. There exists $K>0$ independent of $n$ such that $||u_n||_{\infty}\leq K$ where $u_n$ solves (\ref{dop}).
\end{lem}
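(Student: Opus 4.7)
The plan is to prove the estimate by contradiction via a Gidas--Spruck style blow-up argument, exactly as the introduction anticipates. Suppose there is a subsequence (still denoted $u_n$) with $M_n := \|u_n\|_\infty \to \infty$, and pick $x_n\in\Om$ with $u_n(x_n)=M_n$. The proof splits naturally into two parts: a boundary segregation step (the hard part), and a rescaling/Liouville step.

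\textbf{Step 1 (Boundary segregation).} The first task is to show that $\mathrm{dist}(x_n,\pa\Om)\ge\eta$ for some $\eta>0$ independent of $n$. This is where the strict convexity of $\Om$ and the machinery invoked in the introduction enters: a uniform Hopf-type lemma in the spirit of Vazquez \cite{Vaz} and Peral \cite{Pe} (giving a lower bound on $|\nabla u_n|$ in a uniform tubular neighborhood of $\pa\Om$), combined with a moving plane argument \`a la Castorina--Sanch\'on \cite{CaSa} adapted to cope with the degeneracy of $\delp$. The outcome is that $u_n$ is monotone in the moving plane directions within a collar of $\pa\Om$ whose width does not depend on $n$, forcing every maximum point $x_n$ to lie at a definite distance from the boundary. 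I expect this is the main obstacle of the lemma and the reason the convexity/$C^2$ hypotheses are needed.

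\textbf{Step 2 (Rescaling).} Set $\mu_n = M_n^{(p-1-q)/p}$; since $q>p-1$ we have $\mu_n\to 0$. Define
\[
v_n(y) = M_n^{-1}\, u_n(x_n+\mu_n y), \qquad y\in B_{R_n}(0),\ R_n:=\eta/\mu_n \to \infty.
\]
Then $v_n(0)=1$, $0\le v_n\le 1$, and a direct computation using $-\delp u_n=\lam(u_n+1/n)^{-\del}+u_n^q$ yields
\[
-\delp v_n(y) \;=\; \frac{\lam\, M_n^{-q}}{(M_n v_n + 1/n)^{\del}} \;+\; v_n^q \qquad \text{in } B_{R_n}(0).
\]
Because $\mu_n = M_n^{(p-1-q)/p}$, the $u^q$ term is scale-invariant while the singular term carries a prefactor $M_n^{-q}$, so on any region where $v_n$ is bounded below by a positive constant the singular contribution is $O(M_n^{-q-\del})\to 0$.

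\textbf{Step 3 (Passing to the limit and Liouville).} The right-hand side is uniformly bounded on compact subsets of $\mathbb{R}^N$: on a fixed ball $B_R(0)$, continuity and $v_n(0)=1$ give a uniform positive lower bound for $v_n$ on some smaller ball, controlling the singular term, while $v_n\le 1$ controls the $v_n^q$ term. Invoking the $C^{1,\beta}$ estimates of Tolksdorf \cite{To}/Lieberman \cite{Lie} and Ascoli--Arzel\`a, a subsequence converges in $C^1_{\mathrm{loc}}(\mathbb{R}^N)$ to some $v\ge 0$ with $v(0)=1$ and $\|v\|_\infty\le 1$ solving $-\delp v = v^q$ on all of $\mathbb{R}^N$. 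Since $(p,q)\in\mathbb{E}$ means $p-1<q<p_*-1$ with $p_*$ the Serrin exponent, the Liouville theorem for the $p$-Laplacian (as cited via Azizieh--Cl\'ement \cite{AzCl}, Lemma 1.1) forces $v\equiv 0$, contradicting $v(0)=1$. Hence $M_n$ must stay bounded, and the claimed $K$ exists.
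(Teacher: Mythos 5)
Your overall strategy matches the paper's: a uniform Hopf lemma plus moving-plane monotonicity keeps the blow-up away from $\pa\Om$, and a Gidas--Spruck rescaling then contradicts a Liouville theorem. Two remarks, one of which is a genuine gap.

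\textbf{On the boundary step.} The paper does not argue directly that maximizers stay away from $\pa\Om$; it combines the moving-plane monotonicity with a Picone-identity computation (its Step~5) to get a \emph{uniform $L^\infty$ bound on a fixed collar} $\Om_\eps$, and only then performs the blow-up on compact subsets. Your decomposition --- ``maximizers sit at a definite distance from $\pa\Om$'' --- can also be extracted from the monotonicity alone: for $x\in\Om_\eps$, the cone estimate $u_n(x)\le u_n(y)$ for all $y$ in a truncated cone $I_x$ contained in a fixed strip $\Sig_0$ shows that the maximum value is attained in $\Sig_0$ too. So the Picone step is not logically forced in your route, and the two decompositions are essentially equivalent. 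Be careful with the wording, though: the moving-plane comparison is non-strict, so it does not force \emph{every} maximizer away from the boundary; it only guarantees that one may always \emph{choose} such a maximizer.

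\textbf{The gap.} Your control of the singular term in Step~3 is circular as written. You claim that ``continuity and $v_n(0)=1$ give a uniform positive lower bound for $v_n$ on some smaller ball''. The only uniform equicontinuity available comes from the Tolksdorf/Lieberman $C^{1,\beta}$ estimates, which require the right-hand side of the rescaled equation to be \emph{already} uniformly bounded --- precisely what you are trying to establish. With only $0\le v_n\le 1$ one merely gets
\begin{equation*}
\frac{\lam\,M_n^{-q}}{\big(M_n v_n+\tfrac1n\big)^{\del}} \;\le\; \lam\,M_n^{-q}\,n^{\del},
\end{equation*}
which need not tend to zero, and a pointwise lower bound on $v_n$ near the origin would in any case not control the singular term on the rest of $B_R$. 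The paper avoids this entirely by invoking Lemma~\ref{pursing} together with the strong comparison principle: on any compact $\omega\subset\subset\Om$ there is $c_\omega>0$, independent of $n$, with $u_n\ge c_\omega$. Because the blow-up centre sits at a definite distance from $\pa\Om$, the rescaled points stay in a fixed compact set $\Om_d$, so $M_n v_n=u_n\ge c_{\Om_d}$ and the singular contribution is at most $\lam\,M_n^{-q}\,c_{\Om_d}^{-\del}\to 0$. That is the missing ingredient; once inserted, the rest of your Step~3 is correct.

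A minor citation point: the whole-space Liouville theorem used here is that of Mitidieri--Pohozaev (Theorem~\ref{liou}); the Azizieh--Cl\'ement Lemma~1.1 you cite is the bounded-domain nonexistence result invoked in Lemma~\ref{wosing}, not the entire-space result.
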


\begin{proof}[\textbf{Proof of Lemma \ref{aprio}}]
We will prove the lemma in several steps:\\

 \textbf{Step 1} (Uniform Hopf Lemma) We start by showing that for any $n\in \mathbb{N}$ we have $\frac{\pa u_n}{\pa\eta}(x)<c<0$ for some constant $c$ which is independent of $n$ but depends on $x$ and $\eta$ is the outward unit normal to $\pa\Om$ at the point $x$. \\
Since $\Om$ has a $C^2$ boundary it also satisfies the interior ball condition.  Hence for $x_0\in\pa\Om$ there exists $B_r(y)\subset\Om$ such that $\partial B_r(y)\cap\pa\Om=\{x_0\}$.\\
Define the function $w:B_r(y)\rtar \mathbb{R}$ such that
\begin{equation}\nonumber
w(x)=[2^{\frac{N-p}{p-1}}-1]^{-1} r^{\frac{N-p}{p-1}}|x-y|^{\frac{p-N}{p-1}}-
[2^{\frac{N-p}{p-1}}-1]^{-1}
\end{equation}
Hence $w$ satisfies the following:
\begin{enumerate}
\item $w(x)\equiv 1$ on $\pa B_{\frac{r}{2}}(y)$ and $w(x)=0$ on $\pa B_r(y)$.
\item $0<w(x)<1$ if $x\in  B_r(y)\setminus B_{\frac{r}{2}}(y)$ with $|\nabla w(x)|>c>0$ for some positive constant $c$ depending on $x$ .
\end{enumerate}

Define, $\tau=\inf\{u_n(x)\vert x\in \pa B_{\frac{r}{2}}(y)\}$.\\
Clearly $\tau>0$ is independent of $n$ thanks to the
fact that $u_n>c_{B_{\frac{2r}{3}}(y)}>0$ and since $\pa B_{\frac{r}{2}}\subset B_{\frac{2r}{3}}(y)$ by Lemma \ref{pursing}.
\\
Set $v=\tau w$ and note that $v$ satisfies the following equation:
\begin{eqnarray*}
&-\delp v=0\;\;\mbox{in}\;\; B_r(y)-\overline{B_{\frac{r}{2}}(y)}\\
&v=\tau\;\;\mbox{if}\;\; x\in \partial B_{\frac{r}{2}}(y);\;\; v=0\quad \mbox{if}
\;\; x\in\pa B_r(y)\nonumber
\end{eqnarray*}
 We also have that $u_n\geq v$ on the boundary of $B_r(y)-\overline{B_{\frac{r}{2}}(y)}$ and $\delp v\leq \delp u_n$ in $\Om$.\\
So the Weak Comparison Principle  implies $u_n\geq v$ in $B_r(y)-\overline{B_{\frac{r}{2}}(y)}$.\\
Now since $u_n(x_0)=v(x_0)=0$ so one has from properties of $w$:
\begin{equation*}
\begin{split}
\frac{\pa u_n}{\pa\eta}(x_0)&=\lim_{t\rtar 0}\frac{u_n(x_0-t\eta)}{t}\leq \lim_{t\rtar 0}\frac{v(x_0-t\eta)}{t}\\
&=\frac{\pa v(x_0)}{\pa\eta}=
\tau \frac{\pa w}{\pa\eta}<-c<0. 
\end{split}
\end{equation*}
where $c>0$ is independent of $n$.\\
\textbf{Step 2} (Existence of a neighbourhood of the boundary which is independent of critical points of $u_n$) Define, $Z(u_n)=\{x\in\Om: \nabla u_n(x)=0\}$ to be the the critical set of $u_n$.  Since $u_n\in C^1(\bar{\Om})$ from Step 1 we have that $\frac{\pa u_n}{\pa\eta}<0$ on the boundary. So using the compactness of $\pa\Om$ and $Z(u_n)$ we deduce that $\text{dist}(\pa\Om, Z(u_n))=d_n>0$ for all $n\in\mathbb{N}$. \\We assert that there exist $\epsilon_0>0$ independent of $n$ such that $d_n>\epsilon_0>0$ i.e, there exists a neighbourhood of boundary given by $\Om_{\epsilon_0}= \{x\in\Om:\text{dist}(x,\pa\Om)<\epsilon_0\}$ such that $Z(u_n)\cap \Om_{\epsilon_0}=\phi$. If not, then $\exists\; x_n\in Z(u_n)$ s.t $\text{dist}(x_n,\pa\Om)\to 0$ as $n\to\infty$. Upto a subsequence, $x_{n_k}\to y_0$. Clearly $y_0\in\pa\Om$ and let $\eta(y_0)$ is the unit outward normal to $y_0$ be such that $\frac{\pa u_n}{\pa\eta}(y_0)<c<0$, thanks to the Uniform Hopf Lemma.  Hence there exists $\iota>0$ such that for $x\in B_{\iota}(y_0)\cap \Om$ one has $|\nabla u_n(x)|>\frac{c}{2}$, where $c$ is independent of $n$. This is a contradiction since we can always choose $x_{n_0}\in B_{\iota}(y_0)\cap \Om$ such that $\nabla u_{n_0}(x_{n_0})=0$.\\
\textbf{Step 3} (Monotonicity of $u_n$) For $e\in\mathbb{S}^{n}$, $\ga\in\mathbb{R}$ and a fixed $n\in\mathbb{N}$ define
\begin{itemize}
\item The hyperplane $\mathbb{T}:=\mathbb{T}_{\ga,e}=\{x\in\mathbb{R}^N : x.e=\ga\}$ and the corresponding cap $\Sig=\Sig_{\ga,e}=\{x\in\mathbb{R}^N : x.e<\ga\}$.
\item $a(e)=\inf\limits_{x\in\Om} x.e$
\item  $x'=x_{\ga,e}$ be the reflection of $x$ w.r.t $\mathbb{T}$ i.e, $x'=x+2(\ga-x.e)e$.
\item ${\Sig}'$ be the non-empty reflected cap of $\Sig$ w.r.t $\mathbb{T}$ for any $\ga>a(e)$. 
\item $\Lambda_1(e):=\{\mu>a(e): \forall \gamma\in(a(e),\mu),\;\text{we have}\;(\ref{notouch})\;\text{holds}\}$ and $\Lambda'(e):=\sup \Lambda_1(e)$
\end{itemize} 
where (\ref{notouch}) is given by the following two condition:
\begin{itemize}\label{notouch}
\item  ${\Sig}'$ is not internally tangent to $\pa\Om$ at some point $p\notin T_{\gamma,e}$.
\item For all $x\in\pa\Om\cap T_{\gamma,e}$, $e(x).e\neq 0$ where $e(x)$ is the unit inward normal to $\pa\Om$ at $x$.
\end{itemize}
From Proposition 2 of Azizieh-Lumaire \cite{AzLe} we have that the map $e\to\Lambda'(e)$ is continuous, provided $\Om$ is strictly convex.\\
Further define, $v_n(x)=u_n(x_{\ga,e})$. Using the boundedness and the strict convexity of $\Om$ we have ${\Sig}'$ is contained in $\Om$ for any $\ga\leq \ga_1$, where $\ga_1$ depends only on $\Om$, independent of $e$. Define $\ga_0=\min(\ga_1,\epsilon_0)$. For $\ga-a(e)$ small consider any such $\Sigma$. Now since $v_n$ and $u_n$ both satisfies equation (\ref{dop}) and $\delp$ is invariant under reflection hence on the hyperplane $\mathbb{T}$ both functions coincides. Moreover for $x\in\pa\Sig\cap\pa\Om$ we have $u_n(x)=0$ and $v_n(x)=u_n(x')>0$ since $x'\in\Om$. 
Hence we have,
\begin{eqnarray*}
&\delp u_n+g_n(u_n)+f(u_n)=\delp v_n+g_n(v_n)+f(v_n)\;\;\mbox{in}\;\;\Sig\\
&u_n\leq v_n\;\;\mbox{on}\;\;\pa\Sig
\end{eqnarray*}
where $f_{n}(u)=(u+\frac{1}{n})^{-\delta}$ and $g(u)=u^{q}$.\\
Using the Comparison Principle of Damascelli-Scuinzi \cite{DaSc} for narrow domain we have $u_n\leq v_n$ in $\Sig$. Again using the Comparison Principle we have $u_n\leq v_n$ in $\Sig_{\ga,e}$ for any $\ga\in (a(e),\ga_0]$.\\
So $u_n$ is non-decreasing in the e-direction for all $x\in\Sig_{\ga_0,e}$.\\

\textbf{Step 4} (Existence of a non-zero measurable set away from boundary where $u$ is non-decreasing)\\
Fix $x_0\in\pa\Om$ and let $e=\eta(x_0)$ be the unit outward normal to $\pa\Om$ at $x_0$. From Step 3 we have that $u_n$ is non-decreasing in $e$ direction for all $x\in \Sig_{\ga,e}$ and $a(e)<\ga<\ga_0$.

If $\theta\in \mathbb{S}^{N-1}$ be any other direction close to $e$ then the reflection of $\Sig_{\gamma,\theta}$ w.r.t $\mathbb{T}_{\gamma,\theta}$ will still be in $\Om$ due to the strict convexity of the domain and so $u_n$ will be non-decreasing in the $\theta$ direction. Choose $\gamma=\frac{\gamma_0}{2}$ and consider the region $\Sig_{\frac{\gamma_0}{2},e}$, since $\Om$ is strictly convex there exists a small neighbourhood $\Theta\in \mathbb{S}^{N-1}$ such that $\Sig_{\frac{\gamma_0}{2},e}\subset \Sig_{\gamma_0,\theta}$ for all $\theta\in\Theta$.
Hence $u_n$ is non-decreasing in every direction $\theta\in\Theta$ and for any $x$ with $x.e<\frac{\ga_0}{2}$.\\
Set, $$\Sig_0=\{x\in\Om : \frac{\gamma_0}{8}<x.e<\frac{3\gamma_0}{8}\}$$ 
Clearly $\Sig_0\subset \Sig_{\frac{\gamma_0}{2},e} $ and $u_n$ is non-decreasing in any direction $\theta\in\Theta$ 
and $x\in\Sig_0$. Finally choose $\eps=\frac{\gamma_0}{8}$ and fix any point $x\in\Om_{{\eps}}$.
If $x_0$ is the projection of this point on $\pa\Om$ then 
\begin{equation*}
 u_n(x)\leq u_n(x_0-\eps e)\leq u_n(y)
\end{equation*}
for all $y\in I_x$ where $I_x\subset\Sig_0$ is the truncated cone with vertex at $x_0-{\eps}e$ and opening angle $\frac{\Theta}{2}$. Moreover $I_x$ has the following properties:
\begin{itemize}
\item $|I_x|>\kappa$ for some $\kappa$ depending only on $\Om$ and $\eps$.
\item $u_n(x)\leq u_n(y)$ for all $y\in I_x$ and $n\in\mathbb{N}$. 
\end{itemize}  

\textbf{Step 5} (Deriving the Boundary Estimates)
Using Picone's Identity (\cite{AlHu} or \cite{Ba}) on $e_1$ the first eigenvalue of the p-Laplacian on $\Om$ and
$u_n$ one has using the Strong Maximum Principle of Vazquez \cite{Vaz} that  $\frac{e_1^p}{u_n^{p-1}}\in W^{1,p}_0(\Om)$. \\
Therefore,
\begin{equation}\label{est}
\begin{split}
\int\limits_{\Om} \frac{[g_n(u_n)+f(u_n)]e_1^p}{u_n^{p-1}}&=\int\limits_{\Om} |\nabla u_n|^{p-2} \nabla u_n.\nabla(\frac{e_1^p}{u_n^{p-1}})\\
&\leq \int\limits_{\Om} |\nabla e_1|^p dx\leq C(\Om)
\end{split}
\end{equation}
Let $e_1(z)\geq \zeta>0$ for all $z\in\Om-\Om_{\frac{{\eps}'}{2}}$.
Hence from (\ref{est}) we deduce
\begin{equation*}
\zeta^p\int\limits_{\Om-\Om_{\frac{{\eps}}{2}}}\frac{[g_n(u_n)+f(u_n)]}{u_n^{p-1}}\leq C(\Om).
\end{equation*}
which would then imply that 
 \begin{equation*}
\int\limits_{I_x}\frac{[g_n(u_n)+f(u_n)]}{u_n^{p-1}}\leq \frac{C(\Om)}{\zeta^p}.
\end{equation*}
Now since, 
\begin{equation}
\int\limits_{I_x}\frac{[g_n(u_n)+f(u_n)]}{u_n^{p-1}}\geq \int\limits_{I_x} u_n^{q-p+1}(y) dy\geq u_n^{q-p+1}(x)|I_x|
\end{equation}
we have, 
\begin{equation*}
u_n^{q-p+1}(x)\leq \frac{C'(\Om)}{\zeta^p}\;\text{for some constant} \;C'>0
\end{equation*}
i.e, $u_n(x)\leq \bar{C}$ for all $x\in\Om_{\eps}$ and for all $n\in\mathbb{N}$.\\

\textbf{Step 6} (Initiating the Blow-up Analysis) For any open set ${\Om}'\subset\subset\Om$ there exists $C({\Om}')$ such that $||u||_{\infty}<C({\Om}')$ for every solution $u_n$ of $(P_{n,\lambda})$. \\
Assume by contradiction that there is a sequence $(u_n)$ of positive solutions of $(P_{n,\lambda})$ and a sequence of points $x_n\in\Om$ such that  $M_n=u_n(P_n)=\max\{u_n(x): x\in\bar{{\Om}'}\}\to\infty $ as $n\to\infty$.
Using the boundary estimates we can safely assume that $x_n\to x_0\in \bar{{\Om}'}$ as $n\to\infty$.
Let $2d$ be the distance of $\bar{{\Om}'}$ to $\pa\Om$ and assume $\Om_d=\{x\in\Om : \text{dist}(x,{\Om}')<d\}$\\
Let $R_n$ be the sequence of positive numbers such that $R_n^{\frac{p}{q-p+1}}M_n=1$. Clearly $R_n\to 0$ as $M_n\to\infty$.\\
Define the scaled function $v_n:B_{\frac{d}{R_n}}(0)\to \mathbb{R}$ such that
\begin{equation}\nonumber
v_n(y)=R_n^{\frac{p}{q-p+1}} u_n(P_n+R_n y)
\end{equation} 
Since $u_n$ attains its maxima at $P_n$ we have $||v_n||_{\infty}=v_n(0)=1$.\\
Again $R_n\to 0$ we can choose a $n_0$ such that $B_R(0)\subset B_{\frac{d}{R_n}}(0)$ for a fixed $R>0$.\\
Also we have that $v_n$ satisfies the following:
\begin{equation*}
\begin{split}
&\nabla v_n(y)=R_n^{\frac{p}{q-p+1}+1}\nabla u_n(P_n+R_n y)\\
\mbox{and},\;\;&\delp v_n(y)=R_n^{\frac{pq}{q-p+1}} [\lam f_n(u_n(P_n+R_n y))+R_n^{\frac{-pq}{q-p+1}} v_n^q(P_n+R_n y)]
\end{split}
\end{equation*}
Since $P_n+R_n y\in \bar{\Om}_d\subset\Om$ for any $y\in B(0,R)$ we have from Lemma \ref{pursing} and Theorem \ref{scp2}, \[R_n^{\frac{pq}{q-p+1}} [\lam f_n(u_n(P_n+R_n y))+R_n^{\frac{-pq}{q-p+1}} v_n^q(P_n+R_n y)]\leq C(\bar{\Om}_d)\] for all $n\geq n_0$.  Fixing a ball $\bar{B}\in B(0,\frac{d}{R_n})$ for all $n\geq n_0$, from the Interior estimates of Tolksdorf \cite{To} and Lieberman \cite{Lie} we get the existence of some constant $K>0$ and $\beta\in(0,1)$ depending only on $N,p,B$ such that \[v_n\in C^{1,\beta}(\bar{B})\;\;\mbox{and}\;||v_n||_{1,\beta}\leq K\] This allows us to deduce the existence of a function $v\in C^1(\bar{B})$ and a convergence subsequence $v_n\to v$ in $C^1(\bar{B})$ from Ascoli-Arzela theorem. Passing to the limit we have, 
\begin{eqnarray*}
&\int_{B} |\nabla v|^{p-2}\nabla v\cdot\nabla\phi\geq C\int_B v^q\phi;\;\;\phi\in C_c^{\infty}(B)\\
&v\in C^1(\bar{B}),\;\;v\geq 0 \;\mbox{on}\;\bar{B}
\end{eqnarray*}
Moreover we also have, $||v||_{\infty}=1$. Using Strong Maximum Principle of Vazquez \cite{Vaz}
we also have, $v(x)>0$ for all $x\in B$. Taking larger and larger balls we obtain a Cantor diagonal
subsequence which converges to $v\in C^1(\mathbb{R}^N)$ on all compact subsets of $\mathbb{R}^N$ and satisfy
\begin{eqnarray*}
&\int_{\mathbb{R}^N} |\nabla v|^{p-2}\nabla v\cdot\nabla\phi\geq C\int_{\mathbb{R}^N} v^q\phi;\;\;\phi\in C_c^{\infty}(\mathbb{R}^N)\\
&v\in C^1(\mathbb{R}^N),\;\;v> 0 \;\mbox{in}\;\mathbb{R}^N
\end{eqnarray*} 
which is a contradiction to the Liouville theorem of Mitidieri-Pohozaev (Theorem \ref{liou}).
\end{proof}

\begin{lem}\label{exis}
Assume $\Om$ to be strictly convex. Then there exists $\Lambda>0$ such that for $0<\lambda<\Lambda$ and any $\delta>0$ the problem (\ref{dop}) admits atleast two solution $u,v\in W^{1,p}_{loc}(\Omega)$. Moreover there exists $\alpha>0$ such that $u^{\alpha}, v^{\alpha}\in W^{1,p}_0(\Omega)$. 
\end{lem}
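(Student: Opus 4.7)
The plan is to establish multiplicity in two stages: first prove existence of two distinct solutions $u_n, v_n$ of the regularized problem (\ref{dop}) for each $n \in \mathbb{N}$, and then pass to the limit $n \to \infty$ to recover two solutions of (\ref{mp}). For the regularized problem, I would work with the compact solution operator $T_{\lambda,n}(u) = (-\Delta_p)^{-1}[\lambda(|u|+1/n)^{-\delta} + (u^+)^q]$ on $C^1_0(\overline{\Omega})$ and apply the Ambrosetti--Arcoya scheme \cite{AmAr}. Since $u^q \geq 0$, the solution $w_{n,\lambda}$ of the purely singular regularized problem from Lemma \ref{pursing} is a sub-solution of (\ref{dop}); constructing a suitable super-solution for $\lambda$ small yields a first fixed point $u_n$ near $w_{n,\lambda}$, whose local Leray--Schauder index is $+1$ by a strict monotonicity/minimality argument. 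For the second solution, Lemma \ref{aprio} provides a uniform a priori bound $\|u\|_\infty \leq K$ independent of $n$; fixing $R > K$ and invoking Lemma \ref{nonex} (no solution exists for $\lambda \geq \bar\Lambda$), homotopy invariance of the Leray--Schauder degree along the segment $[\lambda, \bar\Lambda]$ yields $\deg(I - T_{\lambda,n}, B_R(0), 0) = 0$. Combined with the local index $+1$ at $u_n$ and the excision property, a second fixed point $v_n$ distinct from $u_n$ must exist.

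For the limit $n \to \infty$, Lemma \ref{aprio} gives uniform $L^\infty$ bounds on both $\{u_n\}$ and $\{v_n\}$, while Lemma \ref{pursing} yields uniform positive lower bounds $u_n, v_n \geq c_\omega > 0$ on any $\omega \subset\subset \Omega$. Thus the right-hand sides of (\ref{dop}) are uniformly $L^\infty_{loc}$-bounded, and interior $C^{1,\beta}$ regularity (Tolksdorf \cite{To}, Lieberman \cite{Lie}) yields uniform $C^{1,\beta}_{loc}$ bounds. A diagonal Arzela--Ascoli argument extracts subsequences $u_n \to u$ and $v_n \to v$ in $C^1_{loc}(\Omega)$; dominated convergence (using the positive lower bound to control the singular term) then permits passage to the limit in the weak formulation against test functions supported in $\omega \subset\subset \Omega$, producing generalized solutions of (\ref{mp}). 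To establish $u^\alpha \in W^{1,p}_0(\Omega)$, I would test (\ref{dop}) with $\phi = (u_n+1/n)^\delta - (1/n)^\delta \in W^{1,p}_0(\Omega)$; the resulting identity yields a uniform bound on $\int_\Omega (u_n+1/n)^{\delta-1} |\nabla u_n|^p \, dx$, equivalent to a uniform $W^{1,p}_0$ bound on $(u_n+1/n)^\alpha - (1/n)^\alpha$ with $\alpha = (\delta+p-1)/p > 0$, and weak convergence gives $u^\alpha \in W^{1,p}_0(\Omega)$ (and similarly for $v^\alpha$).

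The central obstacle will be ensuring $u \ne v$ in the limit. By construction $\|u_n\|_\infty = O(\lambda^{1/(\delta+p-1)})$, since $u_n$ bifurcates from the purely singular branch whose norm is controlled by Lemma \ref{pursing}. Conversely, the second solution must satisfy a uniform lower bound $\|v_n\|_\infty \geq \delta_0 > 0$: otherwise, a blow-up/scaling argument applied to $v_n/\|v_n\|_\infty$, combined with the same regularity estimates as in the proof of Lemma \ref{aprio}, would produce a nontrivial bounded solution of $-\Delta_p v = v^q$ with arbitrarily small $L^\infty$ norm, contradicting Lemma \ref{wosing}. Therefore, by choosing $\Lambda$ small enough to force $\|u_n\|_\infty < \delta_0/2 \leq \|v_n\|_\infty$, the limits are separated and $u \ne v$ is ensured.
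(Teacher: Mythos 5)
Your overall strategy---degree theory plus the Ambrosetti--Arcoya continuum result applied to the regularized problem, then passage to the limit---is indeed the route the paper takes. However, there are two concrete gaps in the degree-theoretic core of the argument.

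First, you assert that the first fixed point $u_n$, obtained between the pure-singular sub-solution $w_{n,\lambda}$ and a small super-solution, has local Leray--Schauder index $+1$ ``by a strict monotonicity/minimality argument.'' For $p\neq 2$ this is not automatic: the $p$-Laplacian is not linear, so the Amann-type index formula for a minimal solution in an ordered interval does not apply off the shelf, and no non-degeneracy information about $u_n$ is available. The paper avoids computing the index at $u_n$ entirely. Instead it computes $\mathrm{deg}(I-K_0,B_r,0)=1$ at $\lambda=0$ for the operator $K_0=(-\Delta_p)^{-1}g$ \emph{without} the singular term, using Lemma~\ref{deg} together with the facts that $g(s)=s^q$ is superlinear ($q>p-1$), that by Lemma~\ref{wosing} the only solution of $-\Delta_p u=u^q$ with $\|u\|_\infty\le\delta_0$ is $u\equiv 0$, and that the a~priori bound of Lemma~\ref{aprio} and the nonexistence result of Lemma~\ref{nonex} give $\mathrm{deg}(I-K_0,B_R,0)=0$. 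The Ambrosetti--Arcoya result (Lemma~\ref{continum}) then produces a continuum $C_n$ connecting a small component and a large component at $\lambda=0$, and Lemma~\ref{fro} propagates the ordering $u_n<w_{n,\lambda_*}$ along the branch emanating from $(0,0)$.

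Second, and more seriously, your mechanism for separating $u$ from $v$ in the limit is incorrect. You argue that if $\|v_n\|_\infty\to 0$, the rescaling $\tilde v_n=v_n/\|v_n\|_\infty$ would yield a nontrivial bounded solution of $-\Delta_p v=v^q$ with small norm, contradicting Lemma~\ref{wosing}. But under this rescaling the equation for $\tilde v_n$ carries the term
\begin{equation*}
\lambda\,\|v_n\|_\infty^{-(p-1)}\big(\|v_n\|_\infty\tilde v_n+\tfrac{1}{n}\big)^{-\delta},
\end{equation*}
which diverges (rather than vanishes) as $\|v_n\|_\infty\to 0$; the limit problem is not $-\Delta_p v=v^q$. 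Moreover $\tilde v_n$ has $\|\tilde v_n\|_\infty=1$, so even if one could pass to the limit one would not obtain a small-norm solution. In fact, since every solution of (\ref{dop}) dominates $w_{n,\lambda}$, both $u_n$ and $v_n$ are bounded below by the same pure-singular branch, so a lower bound on $\|v_n\|_\infty$ alone cannot distinguish them. The separation in the paper comes from a different source: the Diaz--Saa monotone-nonlinearity argument (Step~2 of the paper's proof) gives \emph{uniqueness} of solutions of (\ref{dop}) with $\|u_n\|_\infty\le M_n(\lambda_0)$, a threshold shown to exceed $\mu+\epsilon$ uniformly in $n$; since the small branch satisfies $\|u_n\|_\infty\le\mu$, the second solution produced by the continuum must satisfy $\|v_n\|_\infty\ge\mu+\epsilon$. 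You would need some such uniqueness-below-threshold statement (not merely Lemma~\ref{wosing}) to make the separation rigorous.

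Your treatment of the passage to the limit and the $W_0^{1,p}$-regularity of a power is essentially correct; your choice $\theta=\delta$ (equivalently $\alpha=(\delta+p-1)/p$) is one admissible value in the paper's family $\alpha>\frac{(p-1)(\delta+p-1)}{p^2}$, and it conveniently avoids the appeal to the distance-function integrability estimate of Mohammed. Note though that these limit steps belong to the proof of Theorem~\ref{MT}; Lemma~\ref{exis} itself only establishes two solutions of the regularized problem for each fixed $n$.
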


Before we begin with the proof of Lemma \ref{exis} we state some lemmas. We will provide proof in cases where they are generalized for p-laplacian.
  
\begin{lem} [DeFigueiredo et al \cite{DeLiNu}]\label{deg}
Let $C$ be a cone in a Banach space $X$ and $\phi:{C}\rightarrow{C}$ be a compact map such that $\phi(0)=0$. Assume that there exists $0<r<R$ such that 
\begin{enumerate}[(1)]
\item $x\neq{t\phi{(x)}}$ for $0\leq t\leq 1$ and $||x||=r$
\item a compact homotopy $F:\overline{B_{R}}\times[0,\infty)\rightarrow{C}$ such that $F(x,0)=\phi(x)$ for $\vert\vert{x}\vert\vert=R,\;F(x,t)\neq{x}\;for\;\vert\vert{x}\vert\vert=R$ and $0\leq{t}<\infty$ and $F(x,t)=x$ has no solution for $x\in\overline{B_{R}}$ for $t\geq{t_{0}}.$
\end{enumerate}
Then if, $U=\{x\in{C}:r<\vert\vert{x}\vert\vert<R\}$ and $B_{\rho}=\{x\in{C}:\vert\vert{x}\vert\vert<\rho\}$ we have $deg(I-\phi,B_{R},0)=0,deg(I-\phi,B_{r},0)=1$ and $deg(I-\phi,U,0)=-1$
\end{lem}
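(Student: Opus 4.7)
The plan is to establish the three degree identities in sequence by standard manipulations of the Leray--Schauder fixed-point index on the cone $C$: homotopy invariance, the solution property, the boundary-dependence property, normalization, and additivity/excision.

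\textbf{Step 1: Degree on $B_r$.} Consider the compact homotopy $H:\overline{B_r}\times[0,1]\to C$ defined by $H(x,t)=t\phi(x)$, which is legitimate since $\phi(C)\subset C$ and the cone is convex. Hypothesis (1) states precisely that $x\neq H(x,t)$ for $\|x\|=r$ and $t\in[0,1]$, so the homotopy is admissible on $\partial B_r$. Homotopy invariance gives $\deg(I-\phi,B_r,0)=\deg(I,B_r,0)$, and the latter equals $1$ by normalization, since $0=\phi(0)\in B_r$.

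\textbf{Step 2: Degree on $B_R$.} Because $F(\cdot,0)$ agrees with $\phi$ on $\partial B_R$ and both maps are compact and fixed-point free on that boundary (by the second clause of (2) applied at $t=0$), the boundary-dependence property of the degree yields $\deg(I-\phi,B_R,0)=\deg(I-F(\cdot,0),B_R,0)$. Next apply homotopy invariance along $F$ on the interval $[0,t_0]$: by (2), $F(x,t)\neq x$ whenever $\|x\|=R$ and $t\in[0,\infty)$, so the homotopy is admissible on $\partial B_R$ throughout $[0,t_0]$. Therefore
\begin{equation*}
\deg(I-F(\cdot,0),B_R,0)=\deg(I-F(\cdot,t_0),B_R,0).
\end{equation*}
Finally, the last clause of (2) says $F(\cdot,t_0)$ has no fixed point anywhere in $\overline{B_R}$, so by the solution property $\deg(I-F(\cdot,t_0),B_R,0)=0$. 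Chaining the equalities gives $\deg(I-\phi,B_R,0)=0$.

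\textbf{Step 3: Degree on $U$.} Taking $t=1$ in hypothesis (1) shows that $\phi$ has no fixed point on $\partial B_r$, which is the common boundary separating $B_r$ and $U$ inside $B_R$. Since $B_R=B_r\cup U\cup\partial B_r$ with the two open pieces disjoint and the fixed-point set disjoint from $\partial B_r$, the additivity (excision) property of the degree applies and gives
\begin{equation*}
\deg(I-\phi,B_R,0)=\deg(I-\phi,B_r,0)+\deg(I-\phi,U,0).
\end{equation*}
Substituting $0$ and $1$ from the previous steps yields $\deg(I-\phi,U,0)=-1$, as required.

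The only real subtlety is ensuring that degree on a cone in a Banach space has the four properties invoked (normalization, homotopy invariance, boundary dependence, additivity); this is standard in fixed-point-index theory and is typically obtained by composing with a retraction $\rho:X\to C$ and reducing to the classical Leray--Schauder degree of $I-\phi\circ\rho$. All the admissibility checks are built directly into the hypotheses of the lemma, so no further analytic work is required.
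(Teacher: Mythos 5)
The paper does not actually prove Lemma \ref{deg}: it is quoted from DeFigueiredo--Lions--Nussbaum \cite{DeLiNu} and used as a black box, so there is no in-paper argument to compare against. Your proof is a correct and entirely standard derivation of the three index identities. The linear homotopy $H(x,t)=t\phi(x)$ stays in the cone by convexity, is admissible on $\partial B_r$ by hypothesis (1), and normalization gives $\deg(I-\phi,B_r,0)=1$ since $0\in B_r$. For the ball $B_R$, you correctly use the boundary-dependence property to replace $\phi$ by $F(\cdot,0)$ (they only agree on $\partial B_R$, which is all that is needed), then homotopy invariance along $F$ on $[0,t_0]$ (admissible since $F(x,t)\neq x$ on $\partial B_R$ for all $t$), and finally the solution property at $t=t_0$ where $F(\cdot,t_0)$ has no fixed point in $\overline{B_R}$; this gives $\deg(I-\phi,B_R,0)=0$. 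Additivity over the disjoint decomposition $B_R=B_r\cup\partial B_r\cup U$ is legitimate because hypothesis (1) at $t=1$ excludes fixed points of $\phi$ on $\partial B_r$, and yields $\deg(I-\phi,U,0)=-1$. You also correctly flag the one genuine subtlety, that the ``degree'' here is really the fixed-point index on the cone $C$ rather than the raw Leray--Schauder degree on an open subset of $X$; the standard construction via a retraction $\rho:X\to C$, reducing to $\deg(I-\phi\circ\rho,\cdot,0)$, is exactly how the four properties you invoke are established in this setting. Nothing is missing.
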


Let us define the set 
$$\mathbb{P}=\{u\in C^{1,\alpha}_0(\bar\Omega): u(x)\geq 0\;\;\text{in}\;\;\overline{\Omega}\}$$ 
Clearly $$\mathbb{P}^{\sim}=\{u\in C^{1,\alpha}_0(\bar\Omega): u(x)> 0\;\mbox{and}\;\frac{\partial u}{\partial \eta}(x)<0\; \text{for all}\; x\in\partial\Om\}$$ is the interior of $\mathbb{P}$, where $\eta$ is the unit outward normal to $\partial\Om$.

\begin{lem}\label{gam}
Assume that $\overline{u}$, $u$ belong to $C_{0}^{1,\alpha}(\bar{\Omega})$ satisfying the equation
$$
-\Delta_{p}\overline{u}>\lambda f_{n}(\overline{u})+g(u)\quad\text{in}\;\;\Omega
$$ and
$$
-\Delta_{p}u=\lambda f_{n}(u)+g(u)\quad\text{in}\;\;\Omega
$$ respectively. If $u\neq \overline{u}$, then we have $\overline{u}-u$ does not belong to $\partial\mathbb{P}$.
\end{lem}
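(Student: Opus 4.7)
I would argue by contradiction, assuming $w := \overline{u} - u$ is nontrivial but lies in $\partial\mathbb{P}$. Since $w \in C^{1,\alpha}_0(\overline{\Om})$ with $w \geq 0$ on $\overline{\Om}$, being on $\partial\mathbb{P}$ forces one of two alternatives: either (a) there exists $x_0 \in \Om$ with $w(x_0)=0$, or (b) $w > 0$ in $\Om$ but $\frac{\partial w}{\partial\eta}(x_0) = 0$ at some $x_0 \in \partial\Om$.

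The first step is to extract a linear elliptic differential inequality for $w$. Subtracting the two (in)equalities gives $-\delp \overline{u} + \delp u > \lambda(f_n(\overline{u}) - f_n(u))$ in $\Om$. Using the mean value theorem on the decreasing function $f_n(t) = (t + 1/n)^{-\delta}$, I can write $f_n(\overline{u}) - f_n(u) = -c(x)\,w$ with $c(x) \geq 0$ bounded on compact subsets of $\Om$ (thanks to the strong maximum principle, both $u$ and $\overline{u}$ stay uniformly positive on such sets). Thus $-\delp \overline{u} + \delp u + \lambda c(x) w > 0$ in $\Om$. Following the standard linearization of the $p$-Laplacian (see Damascelli--Sciunzi \cite{DaSc}), the principal part can be expressed as $-\operatorname{div}(A(x)\nabla w)$, where
\[
A(x) = \int_0^1 \Bigl(|\nabla u_t|^{p-2} I + (p-2)|\nabla u_t|^{p-4}\, \nabla u_t \otimes \nabla u_t \Bigr)\,dt, \quad u_t = u + tw,
\]
is a symmetric matrix whose eigenvalues are controlled by $|\nabla u_t|^{p-2}$.

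For case (b), the Uniform Hopf Lemma proved in Step 1 of Lemma \ref{aprio} yields $|\nabla u|, |\nabla \overline{u}| \geq c_0 > 0$ in a uniform tubular neighbourhood of $\partial\Om$. There $A(x)$ is uniformly elliptic, and applying the classical Hopf boundary-point lemma to the non-negative solution $w$ of the linear inequality $-\operatorname{div}(A\nabla w) + \lambda c\,w > 0$ yields $\frac{\partial w}{\partial\eta}(x_0) < 0$, contradicting the assumption. For case (a), I would appeal to the strong comparison principle for the $p$-Laplacian of Damascelli--Sciunzi \cite{DaSc}: its hypotheses are fulfilled because $t \mapsto \lambda f_n(t) + g(t)$ is locally Lipschitz on $(0,\infty)$ and both $u, \overline{u}$ are uniformly positive on compact subsets of $\Om$. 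The principle gives $w \equiv 0$ in a neighbourhood of $x_0$, and a standard connectedness argument propagates this to $w \equiv 0$ throughout $\Om$, contradicting $u \neq \overline{u}$.

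The main obstacle is case (a): the $p$-Laplacian is degenerate precisely at critical points of $u_t$, so the linearized operator $-\operatorname{div}(A\nabla\cdot)$ need not be strictly elliptic on all of $\Om$ and a direct application of the classical strong maximum principle is unavailable. The refined strong comparison principle of \cite{DaSc}, tailored to the $p$-Laplacian with locally Lipschitz zero-order terms, is the decisive tool that sidesteps this degeneracy and allows the contradiction to be closed.
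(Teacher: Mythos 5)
Your proposal takes a genuinely different and far more elaborate route than the paper's. The paper's proof is a one-step application of the ready-made Strong Comparison Principle (Theorem~\ref{scp2}, cited from \cite{GuVe}): if $w=\overline{u}-u\in\partial\mathbb{P}$ then $\overline{u}\geq u$, and the strict supersolution inequality puts $F:=-\Delta_p\overline{u}$ and $G:=-\Delta_p u$ in the position $F>G\geq 0$ with $\{F=G\}=\emptyset$; Theorem~\ref{scp2} then yields at once $\overline{u}>u$ in $\Omega$ \emph{and} $\partial_\eta(\overline{u}-u)<0$ on $\partial\Omega$, i.e.\ $w\in\mathbb{P}^{\sim}$, contradicting $\partial\mathbb{P}\cap\mathbb{P}^{\sim}=\emptyset$. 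That single citation simultaneously settles both of your cases (a) and (b), and it is precisely the tool the authors already placed in the ``Useful Results'' section for this purpose. You instead attempt to re-derive such a principle from scratch by linearizing $\Delta_p$ along the segment $u_t=u+tw$ and splitting into an interior-touching case and a boundary Hopf case.

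There are two concrete gaps in your version. First, in case (a) you invoke a ``strong comparison principle of Damascelli--Sciunzi'' as if it applied unconditionally; but for the $p$-Laplacian the strong comparison principle is \emph{not} automatic across the critical set of $u_t$, and the Damascelli--Sciunzi versions come with additional structural hypotheses (e.g.\ restriction to connected components of $\Omega\setminus Z$ with $Z$ the joint critical set, or summability conditions on $|\nabla u|^{-1}$, or special geometry). You neither verify these nor explain how to propagate $w\equiv 0$ \emph{through} the degenerate set $Z$; the ``standard connectedness argument'' you appeal to is exactly what fails at $Z$. Second, in case (b) you borrow the Uniform Hopf Lemma of Step~1 of Lemma~\ref{aprio}, but that lemma is proved there for \emph{solutions} $u_n$ of (\ref{dop}); you would need to rerun the barrier argument for the supersolution $\overline{u}$ (which is easy, since $-\Delta_p\overline{u}\geq 0$, but is not something you can simply cite). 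The paper's route avoids both issues because Theorem~\ref{scp2} already packages the degenerate strong comparison and the boundary Hopf conclusion, under hypotheses ($F,G\in L^\infty$, $F\geq G\geq 0$, $\{F=G\}$ of empty interior, $C^2$ connected domain) that the setting supplies directly. I'd recommend replacing the linearization program with a direct appeal to Theorem~\ref{scp2}.
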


\begin{proof}
Assume $\overline{u}-u\in\partial\mathbb{P}$. Hence we have, $\overline{u}(x)\geq u(x)$.  Using Theorem \ref{scp2} we get $\overline{u}-u\in\mathbb{P^\sim}$. Since $\mathbb{P^\sim}\cap\partial\mathbb{P}=\emptyset$, we arrive at a contradiction to our assumption.
\end{proof}
\begin{lem}\label{fro}
Suppose $I\subset\mathbb{R}$ is an interval and let $\sum\subset{I}\times{C}_{0}^{1,\alpha}(\overline{\Omega})$ be a connected set of solutions of equation (\ref{dop}).
Consider a continuous map $U: I\rightarrow {C}_{0}^{1,\alpha}(\overline{\Omega})$ such that $U(\lambda)$ satisfies 
$$
-\Delta_{p}U(\lambda)>\lambda f_{n}(u)+g(u_n)\quad\text{in}\;\;\Omega\;\;\forall\;\;\lambda\in I.
$$
If $u_{0}\leq U(\lambda_{0})$ in $\Omega$, $u_0\neq U(\lambda_{0})$ for some $(\lambda_{0},u_{0})\in{\sum}$ then $u<U(\lambda)$ in $\Omega$ for all $(\lambda,u)\in{\sum}$.  
\end{lem}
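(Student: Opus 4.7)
The plan is to prove Lemma \ref{fro} by a standard connectedness argument. Define
\[
A=\{(\lambda,u)\in\Sigma\, :\, u<U(\lambda)\ \text{in}\ \Omega\}.
\]
The goal is to show $A$ is non-empty, open and closed in $\Sigma$ with its subspace topology; since $\Sigma$ is connected, this will force $A=\Sigma$, which is the desired conclusion.

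First, I would check non-emptiness. By hypothesis $(\lambda_0,u_0)\in\Sigma$ with $u_0\leq U(\lambda_0)$ and $u_0\neq U(\lambda_0)$. Apply Lemma \ref{gam} with $\overline{u}=U(\lambda_0)$ and $u=u_0$: since $U(\lambda_0)$ is a strict supersolution and $u_0$ a solution of (\ref{dop}), the lemma gives $U(\lambda_0)-u_0\notin\partial\mathbb{P}$. But $U(\lambda_0)-u_0\in\mathbb{P}$, so in fact $U(\lambda_0)-u_0\in\mathbb{P}^{\sim}$, whence $u_0<U(\lambda_0)$ in $\Omega$. Thus $(\lambda_0,u_0)\in A$.

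For openness, if $(\lambda_1,u_1)\in A$, the same application of Lemma \ref{gam} yields $U(\lambda_1)-u_1\in\mathbb{P}^{\sim}$. Because $\mathbb{P}^{\sim}$ is open in $C_0^{1,\alpha}(\bar\Omega)$ and the map $\Sigma\ni(\lambda,u)\mapsto U(\lambda)-u\in C_0^{1,\alpha}(\bar\Omega)$ is continuous (continuity of $U$ is built into the hypotheses), a neighborhood of $(\lambda_1,u_1)$ in $\Sigma$ is pushed into $\mathbb{P}^{\sim}$ and so lies in $A$. For closedness, take $(\lambda_k,u_k)\in A$ converging in $I\times C_0^{1,\alpha}(\bar\Omega)$ to some $(\lambda_\infty,u_\infty)\in\Sigma$. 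The pointwise limit of $u_k<U(\lambda_k)$ gives $u_\infty\leq U(\lambda_\infty)$. If equality held, then $U(\lambda_\infty)=u_\infty$ would be a solution of (\ref{dop}), contradicting the strict supersolution property $-\Delta_p U(\lambda_\infty)>\lambda_\infty f_n(U(\lambda_\infty))+g(U(\lambda_\infty))$. Hence $u_\infty\neq U(\lambda_\infty)$, and Lemma \ref{gam} once more gives $U(\lambda_\infty)-u_\infty\in\mathbb{P}^{\sim}$, so $(\lambda_\infty,u_\infty)\in A$.

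The main obstacle is the closedness step: a priori the limit could exhibit touching $u_\infty(x_0)=U(\lambda_\infty)(x_0)$ for some $x_0\in\overline{\Omega}$, which would destroy the strict inequality we need. The mechanism that rules this out is precisely the strictness of the differential inequality satisfied by $U$, together with Lemma \ref{gam} (a Hopf-type/strong-maximum-principle consequence via Theorem \ref{scp2}). Once one observes that strict super- and solution statuses cannot coincide, the rest is a routine $C^{1,\alpha}$-openness bookkeeping.
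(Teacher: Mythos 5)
Your proof is correct and follows essentially the same route as the paper: both arguments rest on the connectedness of $\Sigma$ and on Lemma~\ref{gam} to keep $U(\lambda)-u$ off $\partial\mathbb{P}$, and your explicit nonempty/open/closed verification for $A=\{(\lambda,u)\in\Sigma: u<U(\lambda)\}$ is just the unpacked version of the paper's observation that the connected image $T(\Sigma)$, with $T(\lambda,u)=U(\lambda)-u$, must lie wholly in $\mathbb{P}^{\sim}$ once it avoids $\partial\mathbb{P}$ and meets $\mathbb{P}$.
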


\begin{proof}
Consider a continuous map,
\begin{equation}{\nonumber}
T:I\times{C_{0}^{1,\alpha}(\overline{\Omega})}\rightarrow{C_{0}^{1,\alpha}(\overline{\Omega})}\;\mbox{given by}\;T(\lambda,u)=U(\lambda)-u.
\end{equation}
Since $T$ is a continuous operator, $T(\sum)$ is connected in ${C_{0}^{1,\alpha}(\overline{\Omega})}.$\\
By Lemma \ref{gam}, $T(\sum)$ completely lies in $P^{\sim}$ or completely outside $P.$ 
Since $T(\lambda_0,u_0)\in\mathbb{P}$, we have $T(\sum)\subset\mathbb{P^\sim}$ and therefore $u<U(\lambda)$ for all $(\lambda,u)\in{\sum}$.
\end{proof}

\begin{lem}(Ambrosetti-Arcoya \cite{AmAr})\label{continum}
Given $X$ be a real Banach space with $U\subset X$ be open, bounded set. Let $a,b\in\mathbb{R}$ such that the equation $u-T(\lambda,u)=0$ has no solution on $\partial{U}$ for all $\lambda\in [a,b]$ and that $u-T(\lambda,u)=0$ has no solution in $\overline{U}$ for $\lambda={b}$.\\
Also let $U_{1}\subset{U}$ be open such that $u-T(\lambda,u)=0$ has no solution in $\partial{U_{1}}$ for $\lambda={a}$ and $\mbox{deg}(I-K_{a},U_1,0)\neq{0}$. \\Then there exists a continuum $C$ in $\sum=\{(\lambda,u)\in[a,b]\times{X}:u-T(\lambda,u)=0\}$
such that 
\begin{equation*}
{C}\cap(\{a\}\times{U}_{1})\neq{\emptyset}\;\;\mbox{and}\;\; {C}\cap(\{a\}\times(U-U_{1}))\neq{\emptyset}
\end{equation*}
\end{lem}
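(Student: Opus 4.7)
The plan is to combine Leray--Schauder degree computations with a Whyburn-type separation argument. All degree work uses the implicit assumption that $T$ is compact, so $K_\lambda := T(\lambda,\cdot)$ is a compact perturbation of the identity and the Leray--Schauder degree is well defined.

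First I would compute two degrees at $\lambda = a$. By homotopy invariance, and since $u = T(\lambda,u)$ has no solution on $\partial U$ for $\lambda \in [a,b]$, the map $\lambda \mapsto \deg(I - K_\lambda, U, 0)$ is constant on $[a,b]$. The absence of solutions in $\overline{U}$ at $\lambda = b$ makes this constant zero, hence $\deg(I-K_a,U,0)=0$. Using the vanishing of solutions on $\partial U_1$ at $\lambda=a$, the additivity property gives
$$
\deg(I-K_a,U,0)=\deg(I-K_a,U_1,0)+\deg(I-K_a,U\setminus\overline{U_1},0),
$$
so $\deg(I-K_a,U\setminus\overline{U_1},0)=-\deg(I-K_a,U_1,0)\neq 0$. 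This already yields solutions of $u=K_a u$ in both $U_1$ and $U\setminus\overline{U_1}$; what remains is to link them by a connected set.

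Let $S := \sum \cap ([a,b]\times\overline U)$, which is compact by compactness of $T$ and, because no solutions lie on $\partial U$, actually contained in $[a,b]\times U$. Set $S_1:=S\cap(\{a\}\times U_1)$ and $S_2:=S\cap(\{a\}\times(U\setminus\overline{U_1}))$; both are nonempty (by the degree computation), disjoint and compact. Assume for contradiction that no connected subset of $\sum$ meets both $S_1$ and $S_2$. Then the same is true inside the compact metric space $S$, and Whyburn's separation lemma produces a partition $S = K_1 \sqcup K_2$ into compact sets with $S_i \subset K_i$ for $i=1,2$.

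Exploiting positive distance between $K_1, K_2$ (and between each $K_i$ and $[a,b]\times\partial U$), I would next pick disjoint bounded open sets $V_1, V_2 \subset [a,b]\times U$ with $K_i\subset V_i$, $\overline{V_1}\cap\overline{V_2}=\emptyset$ and $\partial V_i \cap S = \emptyset$ (small product-metric tubes around $K_i$). Generalized homotopy invariance of degree then makes $\lambda \mapsto \deg(I-K_\lambda, V_1^\lambda, 0)$ constant, where $V_1^\lambda$ denotes the $\lambda$-slice of $V_1$. At $\lambda = b$, $V_1^b \subset U$ contains no solution, so the value is zero; at $\lambda=a$, excision (the solutions of $u=K_a u$ in both $V_1^a$ and in $U_1$ reduce to $S_1$) gives $\deg(I-K_a,V_1^a,0)=\deg(I-K_a,U_1,0)\neq 0$, the desired contradiction. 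The main technical obstacle is the careful construction of $V_1,V_2$ together with the correct application of generalized homotopy invariance for non-cylindrical open subsets of $[a,b]\times X$; the degree bookkeeping and Whyburn's lemma are routine once this is set up.
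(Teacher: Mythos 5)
The paper does not prove this lemma: it is quoted verbatim from Ambrosetti--Arcoya \cite{AmAr} as part of a block of auxiliary results, and the authors explicitly say they only reprove lemmas that need modification for the $p$-Laplacian. There is therefore nothing in the paper to compare your argument against.

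That said, your reconstruction is the standard degree-theoretic proof of this kind of continuum result (it is essentially the Leray--Schauder/Rabinowitz scheme). The opening degree bookkeeping is correct: homotopy invariance along $[a,b]$ and vanishing at $\lambda=b$ give $\deg(I-K_a,U,0)=0$, and additivity (licit because $\partial U_1$ carries no solutions at $\lambda=a$) forces $\deg(I-K_a,U\setminus\overline{U_1},0)=-\deg(I-K_a,U_1,0)\neq0$, so both slices contain solutions. The Whyburn separation step is also the right tool, though you need the version for two disjoint compact subsets $S_1,S_2$ of a compact metric space rather than two points; that version follows from the point version by a routine compactness argument, but it should be invoked explicitly. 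The one place you correctly flag as the technical crux is the generalized homotopy invariance for the non-cylindrical open set $V_1$: you need that $(I-K_\lambda)$ has no zeros on $\partial V_1 \cap (\{\lambda\}\times X)$ for each $\lambda$, which your construction ($\partial V_i \cap S = \emptyset$) guarantees, and then $\lambda\mapsto\deg(I-K_\lambda,V_1^\lambda,0)$ is constant; this is a standard but nontrivial extension of homotopy invariance and deserves a citation rather than the label ``routine.'' You are also right that the compactness of $T$ (equivalently of each $K_\lambda$, uniformly in $\lambda$) is an unstated hypothesis of the lemma as written in the paper; without it neither the degree nor the compactness of $S$ is available. With those two caveats supplied, the argument is sound.
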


\begin{proof}[Proof of Lemma \ref{exis}]
\textbf{Step 1:} Define, $A(s)=\frac{1}{2}\Big((\frac{s}{T})^{\delta+p-1} -s^{\delta +q}\Big)$ for $s\in [0,\infty)$ and $T$ is as in Lemma \ref{aprio} and define $$\beta=\max_{0\leq s\leq \delta_2} A(s)$$ where $\delta_1=\frac{1}{2}(2q-2p+3)^{\frac{1}{p-q-1}}T^{\frac{\delta+p-1}{p-q-1}}$.
Clearly for $\delta_2\in(0,\min\{\delta_0,\delta_1\})$ we have, $A$ is strictly positive on $(0,\delta_2)$ and so $\beta>0$. Hence by I.V.P of continuous functions there exists a $\mu\in (0,\delta_2)$ such that $A(\mu)=\lambda_0$.\\
If we set $\lambda_{*}=(\frac{\mu}{T})^{\delta+p-1}$ then $$\lambda_{*}>\lambda_0+{\mu}^{\delta+q}=\lambda_0+[T (\lambda_{*})^{\frac{1}{\delta+p-1}}]^{\delta+q}$$
Hence for $w_{n,\lambda_{*}}$ satisfying equation (\ref{pursinre}) and $n\geq n_0$ one has, \[\lambda_{*}>\lambda_0+ ||w_{n,\lambda_{*}}||^q_{\infty}\big(||w_{n,\lambda_{*}}||_{\infty}+\frac{1}{n}\big)^{\delta}\] which can be rewritten as \[\lambda_{*}> \lambda +w_{n,\lambda_{*}}^q\big(w_{n,\lambda_{*}}+\frac{1}{n}\big)^{\delta}\;\;\mbox{for}\;\;\lambda\leq \lambda_0\]
Therefore \[-\delp w_{n,\lambda_{*}}=\frac{\lambda_{*}}{\big(w_{n,\lambda_{*}}+\frac{1}{n}\big)^{\delta}}> \frac{\lambda}{(w_{n,\lambda_{*}}+\frac{1}{n})^{\delta}} +w_{n,\lambda_{*}}^q\;\;\mbox{for}\;\;\lambda\leq \lambda_0\;\;\mbox{ and}\;\; n\geq n_0\]
Hence we have the existence of a super solution $w_{n,\lambda_{*}}\in C^{1,\alpha}(\bar\Omega)$ for some $\alpha>0$ with $||w_{n,\lambda_{*}}||_{\infty}\leq \mu$ which is not a solution to (\ref{dop}).\\

\textbf{Step 2:} Define $$F_n(s)=\frac{\lambda (s+\frac{1}{n})^{-\delta} +s^q}{s^{p-1}}$$ for $s\in [0,\infty)$. Using the convexity of the function $s^q(s+\frac{1}{n})^{1+\delta}$ we can derive the existence of a unique $M_n>0$ which is increasing w.r.t $\lambda$ such that \[\lambda(p+\delta-1)M_n+\frac{p-1}{n}=(q-p+1) M_n^q(M_n+\frac{1}{n})^{1+\delta}\] Moreover one also have, \[(q-p+1) s^q(s+\frac{1}{n})^{1+\delta}\leq \lambda(p+\delta+1)s+\frac{p-1}{n}\] for $s\leq M_n$. From the above we can conclude that $$F'_n(s)=\frac{1}{s^p}\Big[\frac{\lambda(1-p-\delta)s+\frac{1-p}{n}}{(s+\frac{1}{n})^{1+\delta}}\Big]+(q-p+1) s^{q-p}<0$$ \\Hence $F_n$ is decreasing and by Diaz-Saa \cite{DiSa} we have the existence of a unique solution to equation (\ref{dop}) s.t $||u_n||_{\infty}\leq M_n$. \\Again from Step 1 we have for $\mu<\delta_1$, \[\frac{q-p+1}{\delta+p-1}{\mu}^{\delta+q}<\lambda_0\] provided $\delta>1$. So \[M_n(\lambda_0)\geq M_n(\lambda_n)=\mu+\epsilon\] for all $n\geq m_1$ where $\lambda_m$ is defined as $$\lambda_m:=\frac{(q-p+1)(\mu+\epsilon)^q(\mu+\epsilon+\frac{1}{m})^{1+\delta}}{(\mu+\epsilon)(\delta+p-1)+\frac{p-1}{m}}<\lambda_0$$

Step 3: (Existence of atleast two solution) Write $f_{n}(u)=(u+\frac{1}{n})^{-\delta}$ and $g(u)=u^{q}$ for $n\geq \max(n_0,m_1)$.\\
Define $K_{\lambda}:C(\bar{\Om})\rightarrow C(\bar{\Omega})$ by 
$$K_{\lambda}{(u)}={(-\Delta_{p})^{-1}}(\lambda{f}_{n}{(u)}+g(u));\;\;\lambda\geq 0$$
Using the compact of $(-\delp)^{-1}$ on $C(\bar\Om)$ we can assume that $K_{\lambda}$ is also compact map. Note that one can view equation (\ref{dop}) as the fixed point equation given by  
$u=K_{\lambda}(u)$.\\
Recall from Lemma \ref{nonex} we have equation (\ref{dop}) does not admit any solution for $\lambda\geq \bar\Lambda$. So for $\lambda\in [0,\bar{\Lambda}]$, from Lemma \ref{aprio} we have, ${\vert\vert{u_{n}}\vert\vert}_{\infty}\leq{R}$.\\
Consider the positive cone of $X:=C(\bar\Om)$ given by:
\begin{equation}{\nonumber}
C=\{u\in C(\overline{\Omega}):u\geq{0}\;\mbox{in}\;\Omega\}
\end{equation} 
Define
\begin{equation*}
K_0:C\rightarrow{C}\;\mbox{by}\;\;K_0(u)=(-\Delta_{p})^{-1} g(u)
\end{equation*}
and,
\begin{equation*}
F:\bar{B}_{R}\times[0,\infty)\rightarrow{C}\;\mbox{by}\;
F(u,\lambda)={(-\Delta_{p})^{-1}}(\lambda{f}_{n}{(u)}+g(u)).
\end{equation*}
Using Lemma \ref{wosing}, Lemma \ref{nonex} and Lemma \ref{aprio} we conclude that $K_0$ and $F$ satisfies all the conditions in Lemma \ref{deg} for some $0<r<\mu<R$.  Since $\mu<\delta_0$ so $(I-K_0)u$ has no solution on $\pa B_r$.\\
Hence we have, $\text{deg}(I-K_0,B_R,0)=0$ and $\text{deg}(I-K_0,B_r,0)=1$.\\
Keeping in mind Lemma \ref{nonex} and setting $a=0, b=\bar{\Lambda},T(\lambda,u)=K_{\lambda}(u), U=B_{R}\;\mbox{ and}\; U_1=B_r$  in Lemma \ref{continum}
we get a continuum $C_n\subset\sum=\{(\lambda,u)\in [0,\bar\Lambda]\times X : u-K_{\lambda}(u)=0\}$ such that
\begin{equation}\label{hyuo}
C_n\cap (\{0\}\times B_r)\neq \emptyset,\;C_n\cap(\{0\} \times (B_{R}-B_r))\neq\emptyset
\end{equation}

Define the continuous map $U:[0,\lambda_{0}]\rightarrow{C}_{0}^{1,\alpha}{(\overline{\Omega})}$ by $U(\lambda)=w_{n,\lambda^{*}}\;\forall\;\lambda\in[0,\lambda_{0}]$\\
By lemma \ref{fro} to deduce that every pair $(\lambda,u_{n})$ belonging to the connected component of ${C}_{n}\cap({[0,\lambda_{0}]}\times{C(\bar{\Omega})})$ which emanates from $(0,0)$ lies pointwise below the branch $\{(\lambda,U(\lambda)):{0}\leq\lambda\leq\lambda_{0}\}$ at least until it crosses $\lambda=\lambda_{0}$. \\
In particular there exists $u_n$ in the slice $C_n^{\lambda_0}=\{u\in{C(\overline{\Omega})}:(\lambda_{0},u)\in{C_{n}}\}$ which satisfies that $0<u_n<w_{n,\lambda^{*}}$.
Recalling that $\vert\vert{w_{n,\lambda^{*}}}\vert\vert\leq{\mu}$ we
have ${\vert\vert{u_{n}}\vert\vert}_{\infty}\leq{\vert\vert{w_{n,\lambda^{*}}}\vert\vert}_{\infty}\leq{\mu}$\\
Clearly, from Step 2 we have, $u_n$ is the unique solution of equation ($\ref{dop}$) with small norm e.g,
$\vert\vert{u_n}\vert\vert_{\infty}\leq{\mu+\epsilon}$.\\
 Again by (\ref{hyuo}) one has, $C_n\cap(\{0\} \times (B_{R}-B_{\mu+\epsilon}))\neq\emptyset $ and so we conclude also the existence of $v_n$ such that $\vert\vert{v_{n}}\vert\vert_{\infty}\geq{\mu+\epsilon}$.\\
 Hence we have the existence of two distinct solution for $\lambda=\lambda_0$, since $\lambda_0< \bar\Lambda$ is arbitrary we have our required result.
\end{proof}

\begin{proof}[Proof of Theorem \ref{MT}]
From Lemma \ref{exis} we have the existence of atleast two solution $u_n$ and $v_n$ solving equation (\ref{dop}). \\
Note that we can choose $c>0$ such that $\underline{u}=(c\phi_{1}+n^{\frac{1+p-\delta}{p}})^{\frac{p}{\delta+p-1}}-\frac{1}{n}$ will be a weak sub-solution to the problem (\ref{pursinre}) for $\lambda=\lambda_0$.\\
Since $\frac{\lambda_{0}}{(s+\frac{1}{n})^{\delta}}\leq{\frac{\lambda_{0}}{(s+\frac{1}{n})^{\delta}}}+s^{q}$ for $s\geq{0}$ so one concludes that each solution of (\ref{dop}) with $\lambda=\lambda_{0}$ is a super-solution of (\ref{pursinre}) with $\lambda=\lambda_{0}$\\
Using the Strong Comparison Principle (Theorem \ref{scp2}) we have
\begin{equation}\label{unifo}
\underline{u}\leq{w_{n,\lambda_{0}}}\leq{u_{n}}\leq\mu,\;\underline{u}\leq{w_{n,\lambda_{0}}}\leq{v_{n}}\;\mbox{and}\;\vert\vert{v_{n}}\vert\vert_{\infty}\geq{\mu+\epsilon}>\mu
\end{equation}
Let $z_{n}=u_{n}$ or $v_{n}$ so from (\ref{unifo}) and Lemma \ref{aprio} we have, $$\underline{u}\leq z_n\leq M$$ where $M$ is independent of $n$. 
By Lemma \ref{pursing} and Strong Comparison Principle (Theorem \ref{scp2})
\begin{equation}\label{you}
\forall\;{\omega\subset\subset{\Omega}},\;\exists\;{c_{\omega}}:z_{n}\geq{c_{\omega}}>{0}\;\;\mbox{in}\;\;\omega\;\;\mbox{and for all}\;\;n\;\in\mathbb{N}
\end{equation}
We now claim that $z_{n}$ is bounded in $W_{loc}^{1,p}{(\Omega)}.$\\
Let $\phi\in{C_{0}^{1}{(\Omega)}}$ and take $z_{n}\phi^{p}$ as test function in equation (\ref{dop}) we get
\begin{equation}{\nonumber}
\int_{\Omega}{\vert\nabla{z_{n}}\vert}^{p}\phi^{p}=-p\int_{\Omega}\phi^{p-1}z_{n}{\vert\nabla{z_{n}}\vert}^{p-2}\nabla{\phi}.\nabla{z_{n}}+
\int_{\Omega}\frac{\lambda_{0}z_{n}\phi^{p}}{(z_{n}+\frac{1}{n})^{\delta}}+\int_{\Omega}z_{n}^{q+1}\phi^{p}.
\end{equation}
Again using Young's Inequality with $\epsilon$ we have, 
$\int_{\Omega}{\vert\nabla{z_{n}}\vert}^{p}\phi^{p}\leq{c_{\phi}}\;\mbox{for all}\;n\in{\mathbb{N}}$, where $c_{\phi}>0$
is a constant depending on $\phi$.\\ 
So $z_{n}\in{W_{loc}^{1,p}(\Omega)}$.\\
Hence there exists ${z}\in{W}_{loc}^{1,p}{(\Omega)}\cap{L^{\infty}}({\Omega})$ such that upto a subsequence $z_{n}\rightarrow{z}$ a.e to $z$ and weakly in ${W^{1,p}}{(\omega)}$ for all $\omega\subset\subset\Om$.\\
The convergence of $\int_{\Om} |\nabla u_n|^{p-2} \nabla u_n\nabla\phi\to\int_{\Om}|\nabla u|^{p-2} \nabla u\nabla\phi$ follows as in Theorem 4.4 of Canino et al \cite{CaScTr}. Again applying Dominated convergence theorem we deduce that,
\begin{equation*}
\lim_{n\to\infty}\int_{\Omega}\big(\frac{\lambda_{0}}{(z_{n}+\frac{1}{n})^{\delta}}+\phi{z}_{n}^{q}\big)dx=\lambda_{0}\int_{\Omega}\frac{\phi}{z^{\delta}}+\int_{\Omega}\phi{z}^{p}dx
\end{equation*}

Again since ${\vert\vert{u_{n}}\vert\vert}_{\infty}\leq{\mu}$, ${\vert\vert{v_{n}}\vert\vert}_{\infty}\geq\mu+\epsilon>\mu$ and $u_{n}\rightarrow{u}, v_{n}\rightarrow{v}$ a.e we have the existence of two distinct solution $u$ and $v$.

Now we will prove that for some $\alpha>0$ we have $u^{\alpha},v^{\alpha}\in{W}_{0}^{1,p}{(\Omega)}$.

Fix $\alpha>\frac{(p-1)(\delta+p-1)}{p^2}$ and $\theta=p(\alpha-1)+1$ hence, $\theta>\frac{(\delta-1)(p-1)}{p}$\\
Take $\phi=(z_{n}+\frac{1}{n})^{\theta}-(\frac{1}{n})^{\theta}$ as a test function in equation $(\ref{dop})$ to obtain
\begin{equation}\nonumber
\begin{split}
\int_{\Om} |\nabla\big((z_n+\frac{1}{n})^{\alpha}-\frac{1}{n^{\alpha}}\big)|^p dx &={\alpha}^p \int_{\Om}(z_n+\frac{1}{n})^{(\alpha-1)p} |\nabla z_n|^p dx\\
&\leq \lambda_0 \int_{\Om} (z_n+\frac{1}{n})^{\theta-\delta}+\int_{\Om} (z_n+\frac{1}{n})^{\theta} z_n^q dx\\
&\leq \lambda_0 \int_{\Om} (z_n+1)^{\theta-\delta}+\int_{\Om} (z_n+\frac{1}{n})^{\theta} z_n^q dx
\end{split}
\end{equation}
Now if $\theta\geq\delta$ then the above integration is bounded thanks to Lemma \ref{aprio}.\\
If $\theta<\delta$ then we have,
\begin{equation}{\nonumber}
\begin{aligned}
(z_{n}+\frac{1}{n})^{\theta-\delta}
& \leq{(c\phi_1+n^{\frac{\delta+p-1}{p}})^{\frac{p(\theta-\delta)}{\delta+p-1}}}\leq (c\phi_{1})^{\frac{p(\theta-\delta)}{\delta+p-1}}
\end{aligned}
\end{equation}
Since, $\theta>\frac{(\delta-1)(p-1)}{p}$ hence $\int_{\Om}\phi_{1}^{\frac{p(\theta-\delta)}{\delta+p-1}} dx<\infty$ (See Mohammed \cite{Mo})\\
Therefore $(z_{n}+\frac{1}{n})^{\alpha}-(\frac{1}{n})^{\alpha}$ is bounded in $W_{0}^{1,p}({\Omega})$ and since $z_n$ converges a.e to $z$ in $\Om$ we have, $(z_{n}+\frac{1}{n})^{\alpha}-(\frac{1}{n})^{\alpha}\rightarrow^{w} z^{\alpha}$ a.e in $W_{0}^{1,p}{(\Omega)}$.
\end{proof}

\section*{Useful Results}
\begin{thm}\label{stam} (Stampacchia \cite{Guido})
Assume $\phi:[0,\infty)\rightarrow[0,\infty)$ is a
non-increasing function such that if $h>k>k_0$ for some
$\alpha>0,\beta>1$ and $\phi(h)\leq\frac{d}{(h-k)^{\alpha}}[\phi(k_{0})]^{\beta-1}$ then $\phi(k_{0}+T)=0$ where $T^{\alpha}=d 2^{\frac{\alpha\beta}{\beta-1}}[\phi(k_{0})]^{\beta-1}$
\end{thm}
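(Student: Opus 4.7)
The statement is the classical Stampacchia iteration (or De Giorgi--Moser) lemma; as it is invoked in the proof of Lemma \ref{pursing} on the decay of the super-level sets $|A(k)|$, the intended hypothesis is really $\phi(h)\leq d\,(h-k)^{-\alpha}\,\phi(k)^{\beta}$ for all $h>k>k_0$ with $\beta>1$, and the whole point is to exploit the super-linear right-hand side to run a geometric iteration that forces $\phi$ to vanish past a finite displacement $T$ from $k_0$.

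The plan is to build a telescoping sequence $k_n=k_0+T(1-2^{-n})$, so that $k_n\nearrow k_0+T$ and $k_{n+1}-k_n=T/2^{n+1}$. Applying the hypothesis with $h=k_{n+1}$ and $k=k_n$ produces the recursion
\begin{equation*}
\phi(k_{n+1})\leq \frac{d\,2^{\alpha(n+1)}}{T^{\alpha}}\,\phi(k_n)^{\beta}.
\end{equation*}
The natural ansatz is a geometric decay $\phi(k_n)\leq \phi(k_0)\,r^{-n}$ for some $r>1$ to be chosen. Substituting it into the recursion and comparing exponents shows that the induction step closes for $r=2^{\alpha/(\beta-1)}$, provided $T^{\alpha}$ is large enough to absorb the first iterate; this forces precisely $T^{\alpha}=d\,2^{\alpha\beta/(\beta-1)}\,\phi(k_0)^{\beta-1}$, which is the value asserted in the statement.

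Once the induction is in hand, I would pass to the limit: $\phi(k_n)\to 0$, and monotonicity of $\phi$ together with $k_n\nearrow k_0+T$ yields $\phi(k_0+T)\leq \lim_n\phi(k_n)=0$, i.e.\ $\phi(k_0+T)=0$. There is no real obstacle here; the only point requiring care is checking that the exponents in the induction step balance exactly, which they do with equality, reflecting the sharpness of the prescribed constant in $T$. The choice $r=2^{\alpha/(\beta-1)}$ is dictated by demanding that the factor $2^{\alpha(n+1)}$ coming from the spacing of the $k_n$ be compensated by the factor $r^{-n\beta}$ gained from the super-linear power $\beta$, and the $\phi(k_0)^{\beta-1}$ in the definition of $T$ is exactly what makes the $\phi(k_0)$-dependence telescope through the iteration.
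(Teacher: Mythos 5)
Your proof is correct and is the standard telescoping iteration proof of Stampacchia's lemma. The paper simply cites this as a known result in its ``Useful Results'' section without giving any proof, so there is no paper argument to compare yours against; what you have supplied is exactly the classical argument found in the cited reference.

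You were also right to silently repair the statement before proving it. As printed, the hypothesis $\phi(h)\le d\,(h-k)^{-\alpha}[\phi(k_0)]^{\beta-1}$ is a typo: the right-hand side is independent of $k$ except through the factor $(h-k)^{-\alpha}$, which blows up as $k\to h^-$, so the condition carries no usable information and certainly cannot imply $\phi(k_0+T)=0$. The intended hypothesis is
\begin{equation*}
\phi(h)\le \frac{d}{(h-k)^{\alpha}}\,\phi(k)^{\beta}\qquad\text{for all }h>k\ge k_0,
\end{equation*}
and this is precisely the form that is actually verified and used in the proof of Lemma~\ref{pursing}, where $\phi(k)=|A(k)|$, $\alpha=p^{*}$ and $\beta=p^{*}/p>1$. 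One small refinement worth noting: the inequality must be assumed for $h>k\ge k_0$ (with equality allowed at $k=k_0$), not merely $h>k>k_0$ as the paper writes, since your very first iteration step takes $k=k_0$ and $h=k_1=k_0+T/2$. With that adjustment your induction closes exactly, as you observe, and the conclusion $\phi(k_0+T)=0$ follows from monotonicity of $\phi$ and $k_n\nearrow k_0+T$.
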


\begin{thm}(Strong Comparison Principle \cite{GuVe})\label{scp2}
Assume $p>1$ and $\Om$ be a bounded, connected open subset of $\mathbb{R}^N$ with $C^2$ boundary. If $f, g\in L^{\infty}(\Om)$ and $u,v\in C^1(\bar\Om)$ be two solution of $-\delp u=f\;\mbox{in}\;\Om;\;\;u|_{\partial\Om}=0$ and $-\delp v=g\;\mbox{in}\;\Om;\;\;v|_{\partial\Om}=0$. If we assume also that $f\geq g\geq 0$ a.e. in $\Omega$ and that the set $\mathbb{C}=\{x\in\Om: f(x)=g(x)\}$ 
has empty interior then $$u(x)>v(x)\geq 0\;\;\mbox{for}\;\;x\in\Om\;\;\mbox {and}\;\; \frac{\partial u}{\partial \eta}<\frac{\partial v}{\partial \eta}\leq 0\;\mbox{for all }\; x\in\partial\Om$$.
\end{thm}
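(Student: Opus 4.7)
The plan is to establish $u\geq v\geq 0$ in $\Om$ by standard weak comparison arguments, and then upgrade to the strict inequalities in $\Om$ and on $\pa\Om$ by combining a strong maximum principle and a Hopf lemma for $-\delp$, in the spirit of Vazquez \cite{Vaz}, with the hypothesis that $\mathbb{C}=\{x\in\Om:f(x)=g(x)\}$ has empty interior. The main tool is to rewrite $-\delp u+\delp v$ in ``linearized'' divergence form using the Jacobian of $\xi\mapsto|\xi|^{p-2}\xi$, so that the difference $w:=u-v$ satisfies a quasilinear equation whose sign can be controlled.

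\textbf{Step 1 (weak comparison).} First I would show $v\geq 0$ in $\Om$ by testing $-\delp v=g\geq 0$ against $v^{-}\in W^{1,p}_0(\Om)$; the same argument gives $u\geq 0$. To obtain $u\geq v$, I subtract the two equations and test with $(v-u)^{+}\in W^{1,p}_0(\Om)$:
\[
\int_{\Om}\bigl(|\nabla u|^{p-2}\nabla u-|\nabla v|^{p-2}\nabla v\bigr)\cdot\nabla(v-u)^{+}\,dx=\int_{\Om}(f-g)(v-u)^{+}\,dx.
\]
On $\{v>u\}$ the left integrand equals $-\langle|\nabla v|^{p-2}\nabla v-|\nabla u|^{p-2}\nabla u,\nabla v-\nabla u\rangle\leq 0$ by the standard monotonicity of $\xi\mapsto|\xi|^{p-2}\xi$, while the right-hand side is $\geq 0$ since $f\geq g$. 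Both sides must vanish, giving $\nabla(v-u)^{+}\equiv 0$ a.e., and the zero boundary condition then yields $u\geq v$ in $\Om$.

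\textbf{Step 2 (strict inequality in $\Om$).} Setting $w=u-v\geq 0$, the difference
\[
-\operatorname{div}\bigl(|\nabla u|^{p-2}\nabla u-|\nabla v|^{p-2}\nabla v\bigr)=f-g\geq 0
\]
can be written as $-\operatorname{div}(A(x)\nabla w)=f-g$ with
\[
A(x)=\int_0^1 D_{\xi}\bigl(|\xi|^{p-2}\xi\bigr)\Big|_{\xi=\nabla v(x)+t\nabla w(x)}\,dt.
\]
Suppose, for contradiction, that $w(x_0)=0$ at some $x_0\in\Om$; being an interior minimum, $\nabla u(x_0)=\nabla v(x_0)$. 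On any open subset of $\Om$ where at least one of $\nabla u$, $\nabla v$ stays away from zero, $A$ is uniformly elliptic and the classical linear strong maximum principle forces $w\equiv 0$ there. At points where both gradients vanish, I would invoke Vazquez's strong maximum principle \cite{Vaz} for $-\delp$ applied to the nonnegative $w$ with nonnegative right-hand side to again conclude $w\equiv 0$ in a neighborhood. In either case there is an open $U\subset\Om$ on which $u\equiv v$, so $-\delp u=-\delp v$ and therefore $f\equiv g$ on $U$, contradicting the empty-interior hypothesis on $\mathbb{C}$. Hence $u>v$ in $\Om$.

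\textbf{Step 3 (boundary derivatives) and main obstacle.} The bound $\pa v/\pa\eta\leq 0$ is the standard Hopf estimate applied to $v\geq 0$ with $v=0$ on $\pa\Om$ (the interior ball condition is available since $\pa\Om\in C^2$). For the strict inequality $\pa u/\pa\eta<\pa v/\pa\eta$, fix $x_0\in\pa\Om$, choose an interior ball $B_r(y)\subset\Om$ tangent to $\pa\Om$ at $x_0$, and apply a Hopf-type lemma for $-\delp$ to $w\geq 0$ by comparison with a radial $p$-harmonic barrier on $B_r(y)\setminus\overline{B_{r/2}(y)}$, exactly as in Step 1 of the proof of Lemma \ref{aprio}. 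The hard part throughout is Step 2: the operator $-\delp$ is degenerate (or singular) on $\{|\nabla u|=0\}\cup\{|\nabla v|=0\}$, so no linear strong maximum principle applies uniformly, and one has to combine Vazquez's nonlinear strong maximum principle with the empty-interior hypothesis on $\mathbb{C}$, which is precisely the condition that excludes the only genuine obstruction, namely $u\equiv v$ on some open subset of $\Om$.
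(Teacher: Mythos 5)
This theorem is not proved in the paper at all: it sits in the ``Useful Results'' section and is cited from \cite{GuVe}, so there is no in-paper argument for you to be compared against. Judged on its own merits, your proposal has a genuine gap at the decisive point. Step~1 (weak comparison via $(v-u)^{+}$) is standard and correct, and Step~3 is salvageable once one notes that $u>0$ and the Hopf lemma for $-\delp u=f\geq 0$ force $\nabla u\neq 0$ near $\pa\Om$, so the linearized coefficient matrix $A$ is uniformly elliptic in a boundary strip and the \emph{linear} boundary-point lemma applies to $w$ there; your suggestion of comparing $w$ directly against a radial $p$-harmonic barrier does not work because $w$ is not known to be $p$-superharmonic.

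The real problem is Step~2, at an interior contact point $x_0$ where $\nabla u(x_0)=\nabla v(x_0)=0$. You propose to ``invoke Vazquez's strong maximum principle for $-\delp$ applied to the nonnegative $w$ with nonnegative right-hand side.'' But Vazquez's theorem is a statement about a single function $w$ satisfying $-\delp w+\beta(w)\geq 0$, and $w=u-v$ satisfies no such inequality: what you actually control is
\[
-\operatorname{div}\bigl(|\nabla u|^{p-2}\nabla u-|\nabla v|^{p-2}\nabla v\bigr)=f-g\geq 0,
\]
and for $p\neq 2$ this is not $-\delp w$, nor does it bound $-\delp w$ from below near a common critical point. The matrix
\[
A(x)=\int_0^1 D_\xi\bigl(|\xi|^{p-2}\xi\bigr)\big|_{\xi=\nabla v(x)+t\nabla w(x)}\,dt
\]
that you introduce has eigenvalues comparable to $|\nabla v+t\nabla w|^{p-2}$, so it degenerates (for $p>2$) or becomes singular (for $1<p<2$) exactly on $\{\nabla u=\nabla v=0\}$, and neither the linear strong maximum principle nor Vazquez's nonlinear one closes that case. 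This degeneracy at common critical points is precisely why a strong comparison principle for $-\delp$ is substantially harder than the strong maximum principle, why the statement carries the extra hypothesis that $\{f=g\}$ has empty interior, and why the paper delegates the proof to \cite{GuVe} (Cuesta--Tak\'a\v{c}/Damascelli--Sciunzi type arguments). Your skeleton --- weak comparison, then show the contact set $\{u=v\}$ is both open and closed and use the empty-interior hypothesis to force it empty --- is the right one, but the openness of the contact set at common critical points is the whole theorem and needs a dedicated argument that your cited tools do not provide.
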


\begin{thm}\label{liou}(Mitidieri-Pohozaev \cite{MiPo})
If $p-1<q<\frac{N(p-1)}{N-p}$, $p<N$ and $C>0$ then the problem \[\int_{\mathbb{R}^N} |\nabla w|^{p-2}\nabla w\cdot\nabla\phi\geq C\int_{\mathbb{R}^N} w^q\phi;\;\;\phi\in C_c^{\infty}(\mathbb{R}^N)\]
has no positive solution in $C^1(\mathbb{R}^N)$.
\end{thm}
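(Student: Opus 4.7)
The plan is to employ the nonlinear capacity / test function method of Mitidieri--Pohozaev, which converts the differential inequality into an integral self-improving inequality that becomes vacuous in the Serrin-subcritical range. Throughout, I will assume for contradiction that a positive $w \in C^1(\mathbb{R}^N)$ satisfies the stated inequality and derive $w \equiv 0$. Because $w$ is continuous and strictly positive, it is bounded away from $0$ on every compact set, so test functions that depend on negative powers of $w$ are admissible (they lie in $W^{1,p}$ with compact support and can be approximated by elements of $C_c^\infty(\mathbb{R}^N)$, and the inequality passes to the limit since $|\nabla w|^{p-1} \in L^1_{\text{loc}}$).

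First, I would fix a standard radial cutoff $\psi_R\in C_c^\infty(\mathbb{R}^N)$ with $0\leq\psi_R\leq 1$, $\psi_R\equiv 1$ on $B_R$, $\psi_R\equiv 0$ outside $B_{2R}$, and $|\nabla\psi_R|\leq c/R$. For $\sigma>0$ small and $\lambda$ large (both to be chosen), I would use
\[
\phi \;=\; w^{-\sigma}\psi_R^{\lambda}
\]
as test function. Expanding $\nabla\phi$ produces two contributions: the ``$\nabla w$'' term yields $-\sigma\int |\nabla w|^p w^{-\sigma-1}\psi_R^{\lambda}$ (note the favorable sign once moved to the other side), while the ``$\nabla\psi_R$'' term yields $\lambda\int |\nabla w|^{p-2}\nabla w\cdot\nabla\psi_R\, w^{-\sigma}\psi_R^{\lambda-1}$. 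Applying Young's inequality with exponents $p$ and $p/(p-1)$ to this second term, with weights designed to reproduce $|\nabla w|^p w^{-\sigma-1}\psi_R^{\lambda}$, I absorb the gradient term into the good one (possible provided the Young parameter is chosen smaller than $\sigma$). The residue is a weighted $w^{p-1-\sigma}|\nabla\psi_R|^p\psi_R^{\lambda-p}$ integral. After rearrangement, this gives
\[
C\int w^{q-\sigma}\psi_R^{\lambda}\;\leq\; C_\sigma\int w^{p-1-\sigma}|\nabla\psi_R|^{p}\psi_R^{\lambda-p}.
\]

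Next, I would apply Hölder's inequality on the right-hand side with conjugate exponents $(q-\sigma)/(p-1-\sigma)$ and $(q-\sigma)/(q-p+1)$, where the assumption $p-1<q$ is used to ensure $p-1-\sigma<q-\sigma$ for small $\sigma$. Splitting the integrand so that $w^{p-1-\sigma}\psi_R^{(p-1-\sigma)\lambda/(q-\sigma)}$ matches the energy $\int w^{q-\sigma}\psi_R^{\lambda}$, and choosing $\lambda \geq p(q-\sigma)/(q-p+1)$ to keep all exponents of $\psi_R$ nonnegative, the self-improving algebra (since the exponent $(p-1-\sigma)/(q-\sigma)<1$) collapses to
\[
\int w^{q-\sigma}\psi_R^{\lambda}\;\leq\; C'\int |\nabla\psi_R|^{p(q-\sigma)/(q-p+1)}\psi_R^{\lambda-p(q-\sigma)/(q-p+1)}\;\leq\; C'' R^{\,N-p(q-\sigma)/(q-p+1)}.
\]

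Finally, I would observe that the subcriticality assumption $q<N(p-1)/(N-p)$ is exactly equivalent to $pq>N(q-p+1)$, i.e.\ to $N-pq/(q-p+1)<0$. By continuity, for all sufficiently small $\sigma>0$ the exponent $N-p(q-\sigma)/(q-p+1)$ remains strictly negative; hence the right-hand side tends to $0$ as $R\to\infty$, while the left-hand side converges to $\int_{\mathbb{R}^N} w^{q-\sigma}$ by monotone convergence. This forces $w\equiv 0$, contradicting positivity. The main obstacle I anticipate is the bookkeeping in Step 2: the Young parameters and the Hölder split must be tuned so that the exponent of $\psi_R$ in the final capacity integral is nonnegative (requiring $\lambda$ large), all absorption steps are legal (requiring $\sigma$ strictly positive but small enough to sit in the subcritical window), and the admissibility of the test function $w^{-\sigma}\psi_R^{\lambda}$ is justified via $C_c^\infty$ approximation.
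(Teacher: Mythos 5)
The paper does not supply a proof of this Liouville theorem; it is quoted as a black-box reference (Mitidieri--Pohoz\u{a}ev \cite{MiPo}), listed under ``Useful Results'' and invoked only at the end of Step 6 of Lemma \ref{aprio} to kill the blow-up limit. Your argument is therefore not competing with anything in the paper itself; it reconstructs the standard nonlinear-capacity (test-function) proof that one finds in the cited source. I have checked it and it is correct. The test function $w^{-\sigma}\psi_R^\lambda$ is admissible because $w$ is $C^1$ and strictly positive, hence bounded below on compacts, so $w^{-\sigma}\psi_R^\lambda\in C^1_c$ and can be approximated in $W^{1,p}$ by nonnegative $C^\infty_c$ functions (using $|\nabla w|^{p-1}\in L^\infty_{\mathrm{loc}}$); the sign bookkeeping in the Young absorption is right, since the $-\sigma\int|\nabla w|^p w^{-\sigma-1}\psi_R^\lambda$ term has the favorable sign once moved to the right-hand side; the H\"older exponents $\tfrac{q-\sigma}{p-1-\sigma}$ and $\tfrac{q-\sigma}{q-p+1}$ are conjugate exactly because $(p-1-\sigma)+(q-p+1)=q-\sigma$; the constraint $\lambda\geq p(q-\sigma)/(q-p+1)$ keeps the residual power of $\psi_R$ nonnegative; and the algebraic identity $q<N(p-1)/(N-p)\iff N-pq/(q-p+1)<0$ is correct, so the scaling exponent $N-p(q-\sigma)/(q-p+1)$ stays strictly negative for $\sigma$ small. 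Letting $R\to\infty$ then forces $\int w^{q-\sigma}=0$. One small presentational caveat: when you ``cancel'' $E_R^{(p-1-\sigma)/(q-\sigma)}$ from both sides you are implicitly assuming $E_R<\infty$; this is automatic here since $w\in C^1$ is locally bounded and $\psi_R$ has compact support, but it is worth saying. In short, your proposal is a faithful and correct rendition of the Mitidieri--Pohoz\u{a}ev method; it simply fills in a proof the paper deliberately omits.
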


\section*{Acknowledgement} The first author would like to thank Prof Jacques Giacomoni on some fruitful discussion of the topic. The first author is supported by DST-Inspire Faculty Award MA-2013029.
The second author is supported by NBHM Fellowship No: 2/39(2)/2014/NBHM/R$\&$D-II/8020/June 26,2014

\section*{References}

\bibliography{mybibfile}

\begin{thebibliography}{10}
\expandafter\ifx\csname url\endcsname\relax
  \def\url#1{\texttt{#1}}\fi
\expandafter\ifx\csname urlprefix\endcsname\relax\def\urlprefix{URL }\fi
\expandafter\ifx\csname href\endcsname\relax
  \def\href#1#2{#2} \def\path#1{#1}\fi

\bibitem{CrRaTa}
M.~G. Crandall, P.~H. Rabinowitz, L.~Tartar, On a {D}irichlet problem with a
  singular nonlinearity, Comm. Partial Differential Equations 2~(2) (1977)
  193--222.

\bibitem{LaMc}
A.~C. Lazer, P.~J. McKenna, On a singular nonlinear elliptic boundary-value
  problem, Proc. Amer. Math. Soc. 111~(3) (1991) 721--730.

\bibitem{Ha}
Y.~Haitao, Multiplicity and asymptotic behavior of positive solutions for a
  singular semilinear elliptic problem, J. Differential Equations 189~(2)
  (2003) 487--512.

\bibitem{GiScTa}
J.~Giacomoni, I.~Schindler, P.~Tak{\'a}{\v{c}}, Sobolev versus {H}\"older local
  minimizers and existence of multiple solutions for a singular quasilinear
  equation, Ann. Sc. Norm. Super. Pisa Cl. Sci. (5) 6~(1) (2007) 117--158.

\bibitem{BoOr}
L.~Boccardo, L.~Orsina, Semilinear elliptic equations with singular
  nonlinearities, Calc. Var. Partial Differential Equations 37~(3-4) (2010)
  363--380.

\bibitem{CaScTr}
A.~Canino, B.~Sciunzi, A.~Trombetta, Existence and uniqueness for
  {$p$}-{L}aplace equations involving singular nonlinearities, NoDEA Nonlinear
  Differential Equations Appl. 23~(2) (2016) Art. 8, 18.

\bibitem{BaBaGi1}
M.~Badra, K.~Bal, J.~Giacomoni, A singular parabolic equation: existence,
  stabilization, J. Differential Equations 252~(9) (2012) 5042--5075.

\bibitem{BaBaGi2}
M.~Badra, K.~Bal, J.~Giacomoni, Some results about a quasilinear singular
  parabolic equation, Differ. Equ. Appl. 3~(4) (2011) 609--627.

\bibitem{ArMo}
D.~Arcoya, L.~Moreno-M{\'e}rida, Multiplicity of solutions for a {D}irichlet
  problem with a strongly singular nonlinearity, Nonlinear Anal. 95 (2014)
  281--291.

\bibitem{Lind}
P.~Lindqvist, \href{https://arxiv.org/pdf/1606.02563.pdf}{A remark on the
  kelvin transform for a quasilinear equation}, Preprint.
\newline\urlprefix\url{https://arxiv.org/pdf/1606.02563.pdf}

\bibitem{Vaz}
J.~L. V{\'a}zquez, A strong maximum principle for some quasilinear elliptic
  equations, Appl. Math. Optim. 12~(3) (1984) 191--202.

\bibitem{Pe}
I.~Peral, Multiplicity of solution for p-laplacian, ICTP lecture notes.

\bibitem{CaSa}
D.~Castorina, M.~Sanch\'on, Regularity of stable solutions of {$p$}-{L}aplace
  equations through geometric {S}obolev type inequalities, J. Eur. Math. Soc.
  (JEMS) 17~(11) (2015) 2949--2975.

\bibitem{GiSp}
B.~Gidas, J.~Spruck, A priori bounds for positive solutions of nonlinear
  elliptic equations, Comm. Partial Differential Equations 6~(8) (1981)
  883--901.

\bibitem{AmAr}
A.~Ambrosetti, D.~Arcoya, An introduction to nonlinear functional analysis and
  elliptic problems, Vol.~82 of Progress in Nonlinear Differential Equations
  and their Applications, Birkh\"auser Boston, Inc., Boston, MA, 2011.

\bibitem{Lie}
G.~M. Lieberman, Boundary regularity for solutions of degenerate elliptic
  equations, Nonlinear Anal. 12~(11) (1988) 1203--1219.

\bibitem{To}
P.~Tolksdorf, Regularity for a more general class of quasilinear elliptic
  equations, J. Differential Equations 51~(1) (1984) 126--150.

\bibitem{AzCl}
C.~Azizieh, P.~Cl\'ement, A priori estimates and continuation methods for
  positive solutions of {$p$}-{L}aplace equations, J. Differential Equations
  179~(1) (2002) 213--245.

\bibitem{Ba}
K.~Bal, Generalized {P}icone's identity and its applications, Electron. J.
  Differential Equations (2013) No. 243, 6pp.

\bibitem{AzLe}
C.~Azizieh, L.~Lemaire, A note on the moving hyperplane method, in: Proceedings
  of the 2001 {L}uminy {C}onference on {Q}uasilinear {E}lliptic and {P}arabolic
  {E}quations and {S}ystem, Vol.~8 of Electron. J. Differ. Equ. Conf.,
  Southwest Texas State Univ., San Marcos, TX, 2002, pp. 1--6.

\bibitem{DaSc}
L.~Damascelli, B.~Sciunzi, Regularity, monotonicity and symmetry of positive
  solutions of {$m$}-{L}aplace equations, J. Differential Equations 206~(2)
  (2004) 483--515.

\bibitem{AlHu}
W.~Allegretto, Y.~X. Huang, A {P}icone's identity for the {$p$}-{L}aplacian and
  applications, Nonlinear Anal. 32~(7) (1998) 819--830.

\bibitem{DeLiNu}
D.~G. de~Figueiredo, P.-L. Lions, R.~D. Nussbaum, A priori estimates and
  existence of positive solutions of semilinear elliptic equations, J. Math.
  Pures Appl. (9) 61~(1) (1982) 41--63.

\bibitem{DiSa}
J.~I. D{\'{\i}}az, J.~E. Sa{\'a}, Existence et unicit\'e de solutions positives
  pour certaines \'equations elliptiques quasilin\'eaires, C. R. Acad. Sci.
  Paris S\'er. I Math. 305~(12) (1987) 521--524.

\bibitem{Mo}
A.~Mohammed, Positive solutions of the {$p$}-{L}aplace equation with singular
  nonlinearity, J. Math. Anal. Appl. 352~(1) (2009) 234--245.

\bibitem{Guido}
G.~Stampacchia, Le probl\`eme de {D}irichlet pour les \'equations elliptiques
  du second ordre \`a coefficients discontinus, Ann. Inst. Fourier (Grenoble)
  15~(fasc. 1) (1965) 189--258.

\bibitem{LuPr}
M.~Lucia, S.~Prashanth, Strong comparison principle for solutions of
  quasilinear equations, Proc. Amer. Math. Soc. 132~(4) (2004) 1005--1011.

\bibitem{GuVe}
M.~Guedda, L.~V\'eron, Quasilinear elliptic equations involving critical
  {S}obolev exponents, Nonlinear Anal. 13~(8) (1989) 879--902.

\bibitem{MiPo}
E.~Mitidieri, S.~I. Pokhozhaev, A priori estimates and the absence of solutions
  of nonlinear partial differential equations and inequalities, Tr. Mat. Inst.
  Steklova 234 (2001) 1--384.

\end{thebibliography}

\end{document}